\numberwithin{equation}{section}
\numberwithin{theorem}{section}
\numberwithin{lemma}{section}
\numberwithin{remark}{section}
\begin{document}

\title{ Error estimates for backward fractional Feynman-Kac equation with non-smooth initial data}


\author{Jing Sun$^{1}$, Daxin Nie$^{1}$, Weihua Deng$^{*,1}$
}


\institute{
$^{*}$Corresponding author. E-mail: dengwh@lzu.edu.cn            \\
$^{1}$School of Mathematics and Statistics, Gansu Key Laboratory of Applied Mathematics and Complex Systems, Lanzhou University, Lanzhou 730000, P.R. China \\
}

\date{Received: date / Accepted: date}

\maketitle

\begin{abstract}
In this paper, we are concerned with the numerical solution for the backward fractional Feynman-Kac equation with non-smooth initial data.
Here we first provide the regularity estimate of the solution. And then we use the backward Euler and second-order backward difference convolution quadratures to approximate the Riemann-Liouville fractional substantial derivative and get the first- and second-order convergence in time. The finite element method is used to discretize the Laplace operator with the optimal convergence rates. Compared with the previous works for the backward fractional Feynman-Kac equation, the main advantage of the current discretization is that we don't need the assumption on the regularity of the solution in temporal and spatial directions. Moreover, the error estimates of the time semi-discrete schemes and the fully discrete schemes are also provided. Finally, we perform the numerical experiments to verify the effectiveness of the presented algorithms.

\keywords{ backward fractional Feynman-Kac equation \and fractional substantial derivative \and finite element method \and convolution quadrature \and error analysis}
\end{abstract}


\noindent{\section{Introduction}}

The Feynman-Kac equation describes the distribution of the functionals of the trajectories of the particles, where the functional is defined as $A=\int^t_0U[x(\tau)]d\tau$ with $x(t)$ being a trajectory of a particle and $U(x)$ a prescribed function depending on specific applications \cite{Kac1949}. There are two kinds of Feynman-Kac equations: one is for the forward Feynman-Kac equation, governing the joint probability density of the functional and position; and another one is for the backward equation, just focusing on the distribution of the functionals. If the particles are with power-law waiting time and/or jump length distribution(s), the governing equations for the distribution of the functionals are so-called fractional Feynman-Kac equations \cite{Agmon1984,Carmi2010,Wang2018}, since the fractional substantial derivative is involved in the equations. More generalizations of the Feynman-Kac equations include the models governing  the distribution of the functionals of the particles undergoing the reaction and diffusion processes and of the particles with  multiple internal states \cite{Hou2018,Xu2018,Xu2018-2}.

Here we solve the following backward fractional Feynman-Kac equation, presented in \cite{Carmi2010}, describing the functional distribution of the particles with power-law waiting time, i.e.,

\begin{equation}\label{eqretosol}
\left\{
\begin{aligned}
&\frac{\partial G(x_0,\rho,t)}{\partial t}=\,_0D^{1-\alpha,x_0}_t\Delta G(x_0,\rho,t)-\rho U(x_0)G(x_0,\rho,t), \quad(x_0,t)\in \Omega\times(0,T],\\
&G(x_0,\rho,0)=G_0(x_0),\qquad\qquad\qquad\qquad\qquad\qquad\qquad\qquad\quad  x_0\in \Omega,\\
&G(x_0,\rho,t)=0,\qquad\qquad\quad\qquad\qquad\qquad\qquad\qquad\qquad\qquad (x_0,t)\in \partial \Omega\times(0,T],
\end{aligned}
\right.
\end{equation}
where $G(x_0,\rho,t)=\int_{0}^{\infty}G(x_0,A,t)e^{-\rho A}d A$ and $G(x_0, A, t)$ is the joint probability density function of finding the particle on $A$ at time $t$ with the initial position of the particle at $x_0$;  $\rho$ is the Fourier pair of $A$; $\alpha\in(0,1)$;  $\Delta$ stands for Laplace operator;  $\Omega$ is a bounded domain and $U(x_{0})$ is assumed to be bounded in $\bar{\Omega}$ in this paper; $T$ is a fixed final time; $_0D^{1-\alpha,x_0}_t$  denotes the Riemann-Liouville fractional substantial derivative, whose definition \cite{Li2015} is
\begin{equation}
\begin{aligned}
_{0}D^{1-\alpha,x_0}_tG(x_0,\rho,t)=&\frac{1}{\Gamma(\alpha)}\left[\frac{\partial}{\partial t}+\rho U(x_0)\right]\int^t_{0}(t-\xi)^{\alpha-1}e^{-(t-\xi)\rho U(x_0)}G(x_0,\rho,\xi)d\xi\\
=&e^{-t\rho U(x_0)}~_0D^{1-\alpha}_t(e^{t\rho U(x_0)}G(x_0,\rho,t)),
\end{aligned}
\end{equation}
where $_{0}D^{\alpha}_{t}$ denotes the Riemann-Liouville fractional derivative with the definition \cite{Podlubny1999}
\begin{equation*}
_{0}D^{\alpha}_{t}G(x_0,\rho,t)=\frac{1}{\Gamma(1-\alpha)}\frac{\partial}{\partial t}\int^t_{0}(t-\xi)^{-\alpha}G(x_0,\rho,\xi)d\xi, \qquad\quad \alpha\in(0,1).
\end{equation*}

 So far there have been many works for fractional partial differential equations, including the finite difference method, finite element method, spectral method, and so on \cite{Acosta2018,Bazhlekova2015,chen2009,Cheng2015,Deng2008,Deng2013,Ervin2006,li2012,sun2006}, but there are relatively less researches on solving fractional Feynman-Kac equation numerically \cite{Chen2018,Deng2015,Deng:17,Deng2017,Nie2019}. The main reasons are that fractional substantial derivative is a  time-space coupled non-local operator and the equation covers the complex parameters which bring about many challenges on regularity and numerical analyses. To our best knowledge, numerical approximation on fractional substantial derivative is given in \cite{Chen2015}; Ref. \cite{Deng2015} numerically solves the forward and backward fractional Feynman-Kac equations with the assumptions that the solution is regular, $U(x_0)$ is a positive constant, and $Re(\rho)>0$ ($Re (\rho)$ means the real part of $\rho$); Ref. \cite{Deng2017} presents the $H^1$ error estimate for the backward fractional Feynman-Kac equation with $U(x_0)>0$ and $Re(\rho)>0$; Ref. \cite{Deng:17} provides an efficient time-stepping method to solve the forward fractional Feynman-Kac equation and makes error analysis in the measure norm. In this paper, we use the finite element method in space and convolution quadrature introduced in \cite {Lubich1988,Lubich1988-2} in time to solve the backward fractional Feynman-Kac equation \eqref{eqretosol}. The main contributions are as follows.
 \begin{itemize}
\item We first provide Sobolev regularity for the solution of Eq. \eqref{eqretosol}, i.e.,  Theorem \ref{thmreg} gives that the solution $G(x_0,\rho,t)\in \dot{H}^2(\Omega)$ when $U(x_0)$ is bounded in $\bar{\Omega}$ and $G_0(x_0)\in L^2(\Omega)$. Compared with the previous works \cite{Chen2018,Deng2015,Deng2017}, we construct numerical scheme without any assumption on the regularity of solution in temporal and spatial directions.
\item Then we modify the approximation of the  Riemann-Liouville fractional derivative got by convolution quadrature  to approximate the Riemann-Liouville fractional substantial derivative, which skillfully overcome the trouble brought by the non-commutativity of the Riemann-Liouville fractional derivative and  $e^{-t\rho U(x_0)}$ in error estimate for fully discrete scheme, i.e.,  $e^{-t\rho U(x_0)}~_0D^{\alpha}_t\neq ~_0D^{\alpha}_te^{-t\rho U(x_0)}$ in Eq. \eqref{eqretosoleq2}.

\item Next, a suitable modify based on the Laplace transform representation of solution is presented to guarantee the accuracy of second-order backward difference scheme \eqref{equsemischemeO2} (see Sec. 3).

\item Besides, motivated by the error estimate in space in \cite{Bazhlekova2015,Jin2016}, a general idea is to get the estimate of the difference between  $((\beta_{\tau,1}(z))^\alpha+A)(\beta_{\tau,1}(z))^{\alpha-1}G_0(x_0)$ and $((\beta_{\tau,1}(z))^\alpha+A_h)(\beta_{\tau,1}(z))^{\alpha-1}P_hG_0(x_0)$ (for their detailed definitions, see Sec. 3 and Sec. 4). Generally, the sufficient regularity on $U(x_0)$ is required to ensure the accuracy of the approximation.
    Here, we use $P_h(e^{-t\rho U(x_0)}G_0(x_0))$ for the fully discrete scheme \eqref{equsfullscheme} in Sec. 4  instead of
 $e^{-t\rho U(x_0)}P_hG_0(x_0)$,  which weakens the requirement of regularity on $U(x_0)$ to keep the accuracy of the numerical scheme.


 \item Finally, we provide a complete error analysis for the proposed numerical scheme and obtain the optimal convergence rates in $L^2$- and $H^1$-norm.
\end{itemize}

 The rest of the paper is organized as follows. We first provide some preliminaries and a regularity estimate for the solution of Eq. \eqref{eqretosol} in Sec. 2. Section 3 presents the  approximation of the Riemann-Liouville fractional substantial derivative by backward Euler and second-order backward difference convolution quadratures and gives the error estimates of the time semi-discrete schemes. In Sec. 4, we use the finite element method to discretize the Laplace operator and provide the error estimate for the fully discrete scheme with the non-smooth initial data. In Sec. 5, we verify the effectiveness of the algorithm by numerical experiments. We conclude the paper with some discussions in the last section.

\noindent{\section{Preliminaries}}

First, we introduce $A=-\Delta$ with a zero Dirichlet boundary condition. For any $ r\geq  0 $, denote the space $ \dot{H}^r(\Omega)=\{v\in L^2(\Omega): A^{\frac{r}{2}}v\in L^2(\Omega) \}$ with the norm \cite{Thomee2006}
\begin{equation*}
\|v\|^2_{\dot{H}^r(\Omega)}=\sum_{j=1}^{\infty}\lambda_j^r(v,\varphi_j)^2,
\end{equation*}
where $ {(\lambda_j,\varphi_j)} $ are the eigenvalues ordered non-decreasingly and the corresponding eigenfunctions (normalized in the $ L^2(\Omega) $ norm) of operator $A$. Thus $ \dot{H}^0(\Omega)=L^2(\Omega) $, $\dot{H}^1(\Omega)=H^1_0(\Omega)$, and $\dot{H}^2(\Omega)=H^2(\Omega)\bigcap H^1_0(\Omega)$.
 For $\kappa>0$ and $\pi/2<\theta<\pi$, we define sectors $\Sigma_{\theta}$ and $\Sigma_{\theta,\kappa}$ in the complex plane $\mathbb{C}$ as
\begin{equation*}
	\begin{aligned}
		&\Sigma_{\theta}=\{z\in\mathbb{C}\setminus \{0\},|\arg z|\leq \theta\}, \\
		&\Sigma_{\theta,\kappa}=\{z\in\mathbb{C}:|z|\geq\kappa,|\arg z|\leq \theta\},\\
	\end{aligned}
\end{equation*}
and the contour $\Gamma_{\theta,\kappa}$ is defined by
\begin{equation*}
	\Gamma_{\theta,\kappa}=\{z\in\mathbb{C}: |z|=\kappa,|\arg z|\leq \theta\}\cup\{z\in\mathbb{C}: z=r e^{\pm \mathbf{i}\theta}: r\geq \kappa\},
\end{equation*}
oriented with an increasing imaginary part, where $\mathbf{i}$ denotes the imaginary unit and $\mathbf{i}^2=-1$. Then we denote  $\|\cdot\|$ as the operator norm from $L^2(\Omega)$ to $L^2(\Omega)$ and define $G(t)$ and $G_0$ as $G(x_0,\rho,t)$ and $G_0(x_0)$ respectively in the following.
Throughout this paper, $C$ denotes a generic positive constant, whose value may differ at each occurrence; and let $\epsilon>0$ arbitrary small.

Similar to the skill used in \cite{Chen2018,Deng2015,Deng2017}, Eq. \eqref{eqretosol} can also be converted into
\begin{equation}\label{eqretosoleq}
\left\{
\begin{aligned}
& \,^C_0D^{\alpha,x_0}_tG(x_0,\rho,t)=\Delta G(x_0,\rho,t), \quad\qquad\qquad\qquad\qquad(x_0,t)\in \Omega\times(0,T],\\
&G(x_0,\rho,0)=G_0(x_0),\qquad\qquad\qquad\qquad\qquad\qquad\qquad  x_0\in \Omega,\\
&G(x_0,\rho,t)=0,\qquad\qquad\quad\qquad\qquad\qquad\qquad\qquad\quad\,\, (x_0,t)\in \partial \Omega\times(0,T],
\end{aligned}
\right.
\end{equation}
where $^C_{0}D^{\alpha,x_0}_t$ denotes Caputo fractional substantial derivative defined by \cite{Li2015}
\begin{equation*}
\begin{aligned}
~^C_{0}D^{\alpha,x_0}_tG(x_0,\rho,t)=e^{-t\rho U(x_0)}~^C_0D^{\alpha}_t(e^{t\rho U(x_0)}G(x_0,\rho,t)), \qquad \alpha\in(0,1),
\end{aligned}
\end{equation*}
and $~^C_{0}D^{\alpha}_{t}$ means the Caputo fractional derivative with its definition \cite{Podlubny1999}
\begin{equation*}
~^C_{0}D^{\alpha}_{t}G(x_0,\rho,t)=\frac{1}{\Gamma(1-\alpha)}\int^t_{0}(t-\xi)^{-\alpha}\frac{\partial}{\partial \xi}G(x_0,\rho,\xi)d\xi, \qquad \alpha\in(0,1).
\end{equation*}

Then we recall the Laplace transform for the fractional substantial derivative.
\begin{lemma}[\cite{Li2015}]\label{procapLap}
	The Laplace transform of the Riemann-Liouville  fractional substantial derivative with $\alpha\in(0,1)$ is given by
	\begin{equation*}
	\widetilde{~_0D^{\alpha,x_0}_{t}G}(z)=(\beta(z,x_0))^\alpha \tilde{G}(z),
	\end{equation*}
	and the Laplace transform of the Caputo fractional substantial derivative with $\alpha\in(0,1)$ is given by
	\begin{equation*}
		\widetilde{~^C_0D^{\alpha,x_0}_{t}G}(z)=(\beta(z,x_0))^\alpha \tilde{G}(z)-(\beta(z,x_0))^{\alpha-1}G(0),
	\end{equation*}
	where $\beta(z,x_0)=(z+\rho U(x_0))$ and `$\,\tilde{~}$' stands for taking the Laplace transform. And in the following we denote $\beta(z)$ as $\beta(z,x_0)$.
\end{lemma}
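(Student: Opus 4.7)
The plan is to reduce both identities to the standard Laplace transform formulas for the Riemann--Liouville and Caputo derivatives by exploiting the exponential-conjugation representations
\[
~_0D^{\alpha,x_0}_tG(t)=e^{-t\rho U(x_0)}\,{}_0D^{\alpha}_t\!\bigl(e^{t\rho U(x_0)}G(t)\bigr),\qquad
~^C_0D^{\alpha,x_0}_tG(t)=e^{-t\rho U(x_0)}\,{}^C_0D^{\alpha}_t\!\bigl(e^{t\rho U(x_0)}G(t)\bigr),
\]
combined with the frequency-shift rule
\(\mathcal{L}\{e^{a t}f(t)\}(z)=\tilde f(z-a)\), applied twice with $a=\pm\rho U(x_0)$.

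For the Riemann--Liouville case I would proceed as follows. Set $H(t):=e^{t\rho U(x_0)}G(t)$, so that $\tilde H(z)=\tilde G(z-\rho U(x_0))$. Using the classical formula
\(\mathcal{L}\{{}_0D^{\alpha}_tH\}(z)=z^{\alpha}\tilde H(z)\) for $\alpha\in(0,1)$ (valid since the associated $I^{1-\alpha}H$ vanishes at $0^+$ under the standing assumptions), obtain
\[
\mathcal{L}\{{}_0D^{\alpha}_tH\}(z)=z^{\alpha}\tilde G(z-\rho U(x_0)).
\]
Then multiplying by $e^{-t\rho U(x_0)}$ on the time side amounts to the reverse shift $z\mapsto z+\rho U(x_0)$ in the transform variable, giving
\[
\mathcal{L}\{{}_0D^{\alpha,x_0}_tG\}(z)=\bigl(z+\rho U(x_0)\bigr)^{\alpha}\tilde G(z)=\beta(z)^{\alpha}\tilde G(z),
\]
which is the claimed identity.

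For the Caputo case the same substitution is used, but now the standard formula carries an initial-value term: \(\mathcal{L}\{{}^C_0D^{\alpha}_tH\}(z)=z^{\alpha}\tilde H(z)-z^{\alpha-1}H(0)\). Since $H(0)=G(0)$, applying the shift $z\mapsto z+\rho U(x_0)$ in exactly the same way yields
\[
\mathcal{L}\{{}^C_0D^{\alpha,x_0}_tG\}(z)=\beta(z)^{\alpha}\tilde G(z)-\beta(z)^{\alpha-1}G(0),
\]
as stated.

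The only delicate point is making the two shifts rigorous, i.e.\ ensuring that each Laplace transform exists on a common right half-plane and that the shift identity applies there; this is handled by choosing $\mathrm{Re}(z)$ large enough so that both $\tilde G(z)$ and $\tilde G(z-\rho U(x_0))$ lie in their regions of convergence (and verifying $\lim_{t\to 0^+}(I^{1-\alpha}H)(t)=0$ for the Riemann--Liouville identity). Beyond this bookkeeping, the argument is purely an application of the shift rule and is the reason the single function $\beta(z)=z+\rho U(x_0)$ plays the role of $z$ for the substantial derivative throughout the rest of the paper.
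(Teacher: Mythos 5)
Your proof is correct. The paper does not reproduce a proof of this lemma --- it is quoted from \cite{Li2015} --- and your argument (exponential conjugation to reduce the substantial derivatives to the ordinary Riemann--Liouville and Caputo derivatives, followed by the frequency-shift rule $z\mapsto z\pm\rho U(x_0)$, with the observation $H(0)=G(0)$ and the vanishing of $I^{1-\alpha}H$ at $0^+$) is exactly the standard derivation used in that reference, so nothing further is needed.
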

According to Lemma \ref{procapLap}, the solution of Eq. $\eqref{eqretosoleq}$ can be written as
\begin{equation}\label{equsolrep}
	\tilde{G}(z)=((\beta(z))^{\alpha}+A)^{-1}(\beta(z))^{\alpha-1}G_{0}.
\end{equation}
\begin{remark}
	By the definition of $\beta(z)$, it is easy to see that
	\begin{equation*}
		\begin{aligned}
		&\beta(z) A\neq A\beta(z),\quad A((\beta(z))^{\alpha}+A)^{-1}\neq((\beta(z))^{\alpha}+A)^{-1} A,\\
		&((\beta(z))^{\alpha}+A)^{-1}(\beta(z))^{\alpha-1}\neq (\beta(z))^{\alpha-1}((\beta(z))^{\alpha}+A)^{-1}.
		\end{aligned}
	\end{equation*}
\end{remark}

Before we provide the regularity estimate for the solution of Eq. \eqref{eqretosoleq}, the following lemma about $\beta(z)$ is also needed.
\begin{lemma}[\cite{Deng:17}]\label{lemmaBeta}
	Let $U(x_0)$ be bounded in $\bar{\Omega}$. By choosing $\theta \in \left(\frac{\pi}{2},\pi\right)$ sufficiently close to $\frac{\pi}{2}$
	and $\kappa>0$ sufficiently large (depending on the value $|{{\rho}}|\|U(x_0)\|_{L^{\infty}(\bar{\Omega})}$), we have the following results:
	\begin{enumerate}[(1)]
		\item For all $x\in \Omega$ and ${{z}}\in \Sigma_{\theta,\kappa}$, we have $\beta({{z}}) \in \Sigma_{\frac{3\pi}{4},\frac{\kappa}{2}}$, and
		\begin{equation}\label{chapter4section2_2prop1conc1}
		C_1|{{z}}|\leq|\beta({{z}})|\leq C_2|{{z}}|,
		\end{equation}
		where $C_1$ and $C_2$ denote two positive constants.
		So $\beta({{z}})^{1-\alpha}$ and $\beta({{z}})^{\alpha-1}$ are both   analytic function of ${{z}}\in \Sigma_{\theta,\kappa} $.
		
		\item The operator $((\beta({{z}}))^\alpha+A)^{-1}:L^2(\Omega)\rightarrow L^2(\Omega)$ is well-defined, bounded, and analytic with respect to $z\in \Sigma_{\theta,\kappa}$, satisfying
		\begin{equation}\label{chapter4section2_2prop1conc21}
		\|A((\beta(z))^\alpha+A)^{-1}\|\leq C~~~~~ for~all ~~{{z}} \in \Sigma_{\theta,\kappa},
		\end{equation}
		\begin{equation}\label{chapter4section2_2prop1conc22}
		\|((\beta({{z}}))^\alpha+A)^{-1}\|\leq C|{{z}}|^{-\alpha}~~~for~all~~{{z}} \in \Sigma_{\theta,\kappa}.
		\end{equation}
	\end{enumerate}
\end{lemma}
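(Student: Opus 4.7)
The plan is to prove the two parts in the natural order, starting with the geometric perturbation estimate on $\beta(z)$ and then deducing the resolvent bounds for $(\beta(z))^\alpha+A$ from the standard sectoriality of $A=-\Delta$.

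For part (1), my starting point would be the factorization $\beta(z)=z\bigl(1+\rho U(x_0)/z\bigr)$ together with the bound $|\rho U(x_0)|\le M:=|\rho|\,\|U\|_{L^\infty(\bar{\Omega})}$. Choosing $\kappa$ large enough so that $M/\kappa$ is as small as we wish (say $M/\kappa\le 1/2$) gives, by the triangle inequality,
\begin{equation*}
\tfrac{1}{2}|z|\le |z|-M\le|\beta(z)|\le|z|+M\le \tfrac{3}{2}|z|,
\end{equation*}
which delivers \eqref{chapter4section2_2prop1conc1}. For the argument, the key observation is that for $|w|\le 1/2$ one has $|\arg(1+w)|\le C|w|$, and hence $|\arg\beta(z)|\le|\arg z|+C M/\kappa\le\theta+\varepsilon$. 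By first fixing $\theta\in(\pi/2,\pi)$ sufficiently close to $\pi/2$ and then enlarging $\kappa$, we can arrange $\theta+\varepsilon\le 3\pi/4$, which together with $|\beta(z)|\ge\kappa/2$ places $\beta(z)$ in $\Sigma_{3\pi/4,\kappa/2}$. Analyticity of $\beta(z)^{1\pm\alpha}$ on $\Sigma_{\theta,\kappa}$ then follows because $\beta(z)$ never hits the branch cut $(-\infty,0]$ of the power function.

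For part (2), I would exploit the fact that for $\alpha\in(0,1)$ the map $w\mapsto w^\alpha$ sends $\Sigma_{3\pi/4,\kappa/2}$ into $\Sigma_{3\pi\alpha/4,(\kappa/2)^\alpha}$, so $(\beta(z))^\alpha$ lies in a sector whose closure avoids $(-\infty,0]$ by at least the angle $\pi-3\pi\alpha/4>\pi/4$. Since $A=-\Delta$ with zero Dirichlet boundary data is self-adjoint positive with spectrum $\{\lambda_j\}\subset[\lambda_1,\infty)$, the spectral representation yields
\begin{equation*}
\bigl\|((\beta(z))^\alpha+A)^{-1}\bigr\|=\sup_{j\ge 1}\frac{1}{|(\beta(z))^\alpha+\lambda_j|}.
\end{equation*}
Using the elementary sectorial inequality $|w+\lambda|\ge c(|w|+\lambda)$ for $w\in\Sigma_{\varphi}$, $\varphi<\pi$, and $\lambda\ge 0$, with $c=c(\varphi)>0$, and applying it with $\varphi=3\pi\alpha/4$, I get $|(\beta(z))^\alpha+\lambda_j|\ge c(|(\beta(z))^\alpha|+\lambda_j)$. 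This simultaneously gives $\|((\beta(z))^\alpha+A)^{-1}\|\le C/|(\beta(z))^\alpha|\le C|z|^{-\alpha}$ by \eqref{chapter4section2_2prop1conc1}, and $|\lambda_j/((\beta(z))^\alpha+\lambda_j)|\le 1/c$, which is \eqref{chapter4section2_2prop1conc21}. Analyticity of the resolvent in $z$ is then inherited from the analyticity of $(\beta(z))^\alpha$ established in part (1) combined with analyticity of the resolvent of $A$ away from its spectrum.

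The main obstacle I expect is getting the constants in part (1) to interlock properly: the parameter $\kappa$ has to be tuned based on $|\rho|\,\|U\|_{L^\infty}$ and simultaneously has to be large enough that the angular perturbation $|\arg(1+\rho U(x_0)/z)|$ fits into the slack $3\pi/4-\theta$. Once that balancing is done, part (2) is essentially a routine application of sectorial operator theory, whose only subtlety is verifying that the sector containing $(\beta(z))^\alpha$ stays uniformly bounded away from the negative real axis — which fails exactly when $\alpha\uparrow 1$, so the hypothesis $\alpha\in(0,1)$ is used crucially here.
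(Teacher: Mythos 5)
The paper does not prove this lemma itself; it imports it from \cite{Deng:17}. Judged on its own terms, your part (1) is correct and is the standard argument: the perturbation $\rho U(x_0)$ has modulus at most $M=|\rho|\,\|U\|_{L^{\infty}(\bar{\Omega})}$, so taking $\kappa\geq 2M$ and $\theta$ close enough to $\pi/2$ controls both $|\beta(z)|$ and $\arg\beta(z)$ uniformly in $x_0$, and the analyticity of $\beta(z)^{\pm(1-\alpha)}$ follows since $\beta(z)$ avoids the branch cut.

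Part (2), however, contains a genuine gap. Because $U(x_0)$ is a non-constant function, $(\beta(z))^{\alpha}=(z+\rho U(x_0))^{\alpha}$ is a \emph{multiplication operator}, not a scalar, and it does not commute with $A$ (the paper stresses precisely this in the Remark following \eqref{equsolrep}). Hence $(\beta(z))^{\alpha}+A$ is not diagonalized by the eigenfunctions $\varphi_j$ of $A$, and the identity $\|((\beta(z))^{\alpha}+A)^{-1}\|=\sup_j |(\beta(z))^{\alpha}+\lambda_j|^{-1}$ on which your whole part (2) rests is false in general; the two resolvent bounds you derive from it are therefore unsupported. The repair replaces the spectral argument by a numerical-range (sesquilinear form) argument: by your part (1) every value $(\beta(z,x))^{\alpha}$ lies in the common sector $\Sigma_{3\pi\alpha/4}$, so for $w\in H^1_0(\Omega)$ one has $|((\beta(z))^{\alpha}w,w)|\geq c\,\inf_{x}|\beta(z,x)|^{\alpha}\|w\|_{L^2(\Omega)}^2$ (this is exactly the computation carried out in Appendix A of the paper), and then Lemma \ref{lemestofapb} applied to $((\beta(z))^{\alpha}w,w)+\|\nabla w\|_{L^2(\Omega)}^2$ yields $\|((\beta(z))^{\alpha}+A)w\|_{L^2(\Omega)}\|w\|_{L^2(\Omega)}\geq c\bigl(|z|^{\alpha}\|w\|_{L^2(\Omega)}^2+\|\nabla w\|_{L^2(\Omega)}^2\bigr)$, which gives \eqref{chapter4section2_2prop1conc22}. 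Estimate \eqref{chapter4section2_2prop1conc21} then follows from the identity $A((\beta(z))^{\alpha}+A)^{-1}=I-(\beta(z))^{\alpha}((\beta(z))^{\alpha}+A)^{-1}$ together with $\sup_{x}|\beta(z,x)|^{\alpha}\leq C|z|^{\alpha}$, not from a pointwise bound on $\lambda_j/((\beta(z))^{\alpha}+\lambda_j)$. A minor side point: your remark that the sector containing $(\beta(z))^{\alpha}$ degenerates as $\alpha\uparrow 1$ is not accurate — since $\beta(z)\in\Sigma_{3\pi/4}$ and $3\pi/4<\pi$, the angle $3\pi\alpha/4$ stays uniformly away from $\pi$ for all $\alpha\leq 1$.
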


\begin{theorem}\label{thmreg}
	Assume $U(x_0)$ is bounded in $\bar{\Omega}$. If $G_{0}\in L^2(\Omega)$ and $G(t)$ is the solution of Eq. \eqref{eqretosoleq}, then we have the estimate
	\begin{equation*}
	\|G(t)\|_{\dot{H}^{\sigma}(\Omega)}\leq Ct^{-\sigma\alpha/2}\|G_0\|_{L^2(\Omega)}, \quad \sigma\in[0,2].
	\end{equation*}
\end{theorem}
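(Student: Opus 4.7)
The natural starting point is the solution representation \eqref{equsolrep}, from which Bromwich's formula gives
\begin{equation*}
G(t)=\frac{1}{2\pi\mathbf{i}}\int_{\Gamma_{\theta,\kappa}}e^{zt}\,((\beta(z))^\alpha+A)^{-1}(\beta(z))^{\alpha-1}G_0\,dz,
\end{equation*}
with $\theta,\kappa$ as in Lemma \ref{lemmaBeta}. The strategy is to move $A^{\sigma/2}$ under the integral sign, bound the integrand in $L^2(\Omega)$ using the resolvent and $\beta$-estimates from Lemma \ref{lemmaBeta}, and then evaluate the contour integral on a $t$-dependent deformation.

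For the pointwise (in $z$) bound on the integrand, the two ingredients from Lemma \ref{lemmaBeta}, namely \eqref{chapter4section2_2prop1conc21} and \eqref{chapter4section2_2prop1conc22}, combined with the spectral-theorem interpolation $\|A^{\sigma/2}u\|_{L^2(\Omega)}\le\|u\|_{L^2(\Omega)}^{1-\sigma/2}\|Au\|_{L^2(\Omega)}^{\sigma/2}$ (valid for the self-adjoint positive operator $A$ and any $u\in\dot{H}^2(\Omega)$, $\sigma\in[0,2]$), applied with $u=((\beta(z))^\alpha+A)^{-1}v$, yield
\begin{equation*}
\|A^{\sigma/2}((\beta(z))^\alpha+A)^{-1}\|\le C|z|^{-\alpha(1-\sigma/2)}.
\end{equation*}
Coupling this with $|(\beta(z))^{\alpha-1}|\le C|z|^{\alpha-1}$, a consequence of \eqref{chapter4section2_2prop1conc1}, one obtains the integrand estimate
\begin{equation*}
\|A^{\sigma/2}e^{zt}((\beta(z))^\alpha+A)^{-1}(\beta(z))^{\alpha-1}G_0\|_{L^2(\Omega)}\le C|e^{zt}|\,|z|^{\alpha\sigma/2-1}\|G_0\|_{L^2(\Omega)}.
\end{equation*}

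To finish, I would deform $\Gamma_{\theta,\kappa}$ to $\Gamma_{\theta,\kappa_t}$ with $\kappa_t=\max(\kappa,1/t)$; this is permissible since Lemma \ref{lemmaBeta} guarantees analyticity of the integrand throughout $\Sigma_{\theta,\kappa}$ together with integrable decay at infinity. A routine splitting into the two straight rays $z=re^{\pm\mathbf{i}\theta}$, $r\ge\kappa_t$, and the arc $|z|=\kappa_t$, together with the bound $|e^{zt}|\le e^{rt\cos\theta}$ on the rays (with $\cos\theta<0$) and the substitution $u=rt$, produces
\begin{equation*}
\int_{\Gamma_{\theta,\kappa_t}}|e^{zt}|\,|z|^{\alpha\sigma/2-1}\,|dz|\le Ct^{-\alpha\sigma/2},
\end{equation*}
uniformly in $t\in(0,T]$ (when $t\ge 1/\kappa$ the integral is simply bounded by a constant, which is absorbed into $Ct^{-\alpha\sigma/2}$ because $t$ is bounded above). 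The main technical point is the interpolation step: since $\beta(z)$ is multiplication by a function of $x_0$, it fails to commute with $A$ (as underscored in the Remark), so the bound on $\|A^{\sigma/2}((\beta(z))^\alpha+A)^{-1}\|$ cannot be read off directly from a joint functional calculus and must instead be obtained by applying the scalar spectral-theorem interpolation pointwise, for each $z\in\Gamma_{\theta,\kappa_t}$, to the vector $((\beta(z))^\alpha+A)^{-1}v$; everything else is a standard Bromwich-contour computation.
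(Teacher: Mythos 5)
Your argument is correct and follows essentially the same route as the paper: the Laplace-transform representation \eqref{equsolrep}, the bounds \eqref{chapter4section2_2prop1conc1}--\eqref{chapter4section2_2prop1conc22} of Lemma \ref{lemmaBeta}, and a contour estimate on $\Gamma_{\theta,\kappa}$. The only structural difference is where the interpolation is performed --- the paper proves the endpoint cases $\sigma=0$ and $\sigma=2$ separately and then interpolates the two time-dependent estimates, whereas you apply the moment inequality $\|A^{\sigma/2}u\|_{L^2(\Omega)}\leq\|u\|_{L^2(\Omega)}^{1-\sigma/2}\|Au\|_{L^2(\Omega)}^{\sigma/2}$ to the resolvent pointwise in $z$ and run a single contour computation --- and your $t$-dependent truncation $\kappa_t=\max(\kappa,1/t)$ is in fact slightly more careful at $\sigma=0$, where the paper's integral $\int_{\kappa}^{\infty}e^{rt\cos\theta}r^{-1}\,dr$ is only logarithmically, not uniformly, bounded as $t\to0$.
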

\begin{proof}
	Taking inverse Laplace transform and $L^2(\Omega)$ norm on both sides of \eqref{equsolrep}, according to Lemma \ref{lemmaBeta}, we have
	\begin{equation*}
		\begin{aligned}
			\|G(t)\|_{L^2(\Omega)}\leq& C\left \|\int_{\Gamma_{\theta,\kappa}}e^{zt}((\beta(z))^\alpha+A)^{-1}(\beta(z))^{\alpha-1}G_0dz\right \|_{L^2(\Omega)}\\
			\leq& C\int_{\Gamma_{\theta,\kappa}}|e^{zt}|\left \|((\beta(z))^\alpha+A)^{-1}(\beta(z))^{\alpha-1}\right \|\|G_0\|_{L^2(\Omega)}|dz|\\
			\leq& C\int_{\Gamma_{\theta,\kappa}}|e^{zt}| |z|^{-1}\|G_0\|_{L^2(\Omega)}|dz|\\
			\leq& C\left(\int_{\kappa}^{\infty}e^{r\cos(\theta)t}r^{-1}dr+\int_{-\theta}^{\theta}e^{\kappa\cos(\varphi)t}d\varphi\right)\|G_0\|_{L^2(\Omega)}\\
			\leq& C\|G_0\|_{L^2(\Omega)}.
		\end{aligned}
	\end{equation*}
Applying $A$ on both sides of \eqref{equsolrep}, taking inverse Laplace transform, and acting $L^2(\Omega)$ norm on both sides, from Lemma \ref{lemmaBeta}, there is
\begin{equation*}
\begin{aligned}
\|AG(t)\|_{L^2(\Omega)}\leq& C\left \|\int_{\Gamma_{\theta,\kappa}}e^{zt}A((\beta(z))^\alpha+A)^{-1}(\beta(z))^{\alpha-1}G_0dz\right \|_{L^2(\Omega)}\\
\leq& C\int_{\Gamma_{\theta,\kappa}}|e^{zt}|\left \|A((\beta(z))^\alpha+A)^{-1}(\beta(z))^{\alpha-1}\right \|\|G_0\|_{L^2(\Omega)}|dz|\\
\leq& C\int_{\Gamma_{\theta,\kappa}}|e^{zt}| |z|^{\alpha-1}\|G_0\|_{L^2(\Omega)}|dz|\\
\leq& C\left(\int_{\kappa}^{\infty}e^{r\cos(\theta)t}r^{\alpha-1}dr+\int_{-\theta}^{\theta}e^{\kappa\cos(\varphi)t}\kappa^{\alpha}d\varphi\right)\|G_0\|_{L^2(\Omega)}\\
\leq& Ct^{-\alpha}\|G_0\|_{L^2(\Omega)}.
\end{aligned}
\end{equation*}
Using interpolation properties \cite{Deng2017} leads to
\begin{equation*}
	\|G(t)\|_{\dot{H}^{\sigma}(\Omega)}\leq Ct^{-\sigma\alpha/2}\|G_0\|_{L^2(\Omega)},\quad \sigma\in[0,2].
\end{equation*}
	
\end{proof}

\section{Temporal discretization and error analysis}
In this section, we first use the backward Euler and second-order backward difference convolution quadratures introduced in \cite{Lubich1988,Lubich1988-2} to discretize the Riemann-Liouville fractional substantial derivative and get the first- and second-order schemes in time. Then we provide the complete error analysis.

 Let the time step size $\tau=T/N$ with $N\in\mathbb{N}$, $t_i=i\tau$, $i=0,1,\ldots,N$, and $0=t_0<t_1<\cdots<t_N=T$. Firstly, we use the relationship between Caputo fractional derivative and Riemann-Liouville fractional derivative
\begin{equation*}
\,^C_0D^{\alpha}_{t}u(t)=\,_0D^{\alpha}_{t}(u(t)-u(0))~~{\rm with}~~  \alpha\in(0,1)
\end{equation*}
to reformulate Eq. \eqref{eqretosoleq} with Riemann-Liouville fractional substantial derivative, i.e.,
\begin{equation}\label{eqretosoleq2}
\left\{
\begin{aligned}
& \,_0D^{\alpha,x_0}_tG(x_0,\rho,t)+A G(x_0,\rho,t)=e^{-\rho U(x_0)t}\,_0D^{\alpha}_{t}G(x_0,\rho,0), \,\,\,\,\,\, (x_0,t)\in \Omega\times(0,T],\\
&G(x_0,\rho,0)=G_0(x_0),\qquad\qquad\qquad\qquad\qquad\qquad\qquad\qquad x_0\in \Omega,\\
&G(x_0,\rho,t)=0,\qquad\qquad\qquad\quad\quad\qquad\qquad\qquad\qquad\qquad\,\,\, (x_0,t)\in \partial \Omega\times(0,T].
\end{aligned}
\right.
\end{equation}

\subsection{Backward Euler scheme and error estimate}
We use backward Euler convolution quadrature to discretize the time fractional substantial derivative and get the first-order accuracy in time. Introduce $G^n$ as the numerical approximation of solution $G(x_0,\rho,t_n)$. Then we can obtain the temporal semi-discrete scheme
\begin{equation}\label{equsemischeme}
\left\{
\begin{aligned}
	&\sum_{i=0}^{n-1}d^{\alpha,1}_{i}e^{-t_{i}\rho U(x_0)}G^{n-i}+A G^n=e^{-t_{n}\rho U(x_0)}\sum_{i=0}^{n-1}d^{\alpha,1}_{i}G^0,\quad\,\, x_0\in \Omega,\quad n=1,\ldots,N,\\	
	&G^0=G_0,\qquad\qquad\qquad\qquad\qquad\qquad\qquad\qquad\qquad\quad  x_0\in \Omega,\\
	&G^n(x_0)=0,\qquad\qquad\quad\qquad\qquad\qquad\qquad\qquad\qquad\quad x_0\in \partial \Omega,\quad n=1,\ldots,N,
	\end{aligned}
	\right.
\end{equation}
where
\begin{equation}\label{equdefd}
	(\delta_{\tau,1}(\zeta))^{\alpha}=\sum_{i=0}^{\infty}d^{\alpha,1}_{i}\zeta^i
\end{equation}
and
\begin{equation*}
	\delta_{\tau,1}(\zeta)=\frac{1-\zeta}{\tau }.
\end{equation*}
Multiplying $\zeta^n$ on both sides of the first formula of \eqref{equsemischeme} and summing $n$ from $1$ to $\infty$ lead to
\begin{equation}\label{equsemimid}
	\sum_{n=1}^{\infty}\sum_{i=0}^{n-1}d^{\alpha,1}_{i}e^{-t_{i}\rho U(x_0)}G^{n-i}\zeta^n+\sum_{n=1}^{\infty}A G^n\zeta^n=\sum_{n=1}^{\infty}e^{-t_{n}\rho U(x_0)}\sum_{i=0}^{n-1}d^{\alpha,1}_{i}G^0\zeta^n.
\end{equation}
Simple calculation implies
\begin{equation*}
	(\delta_{\tau,1}(e^{-\tau\rho U(x_0)}\zeta))^{\alpha}\sum_{n=1}^{\infty}G^n\zeta^n+A\sum_{n=1}^{\infty}G^n\zeta^n=(\delta_{\tau,1}(e^{-\tau\rho U(x_0)}\zeta))^{\alpha}\sum_{n=1}^{\infty}e^{-t_{n}\rho U(x_0)}G^0\zeta^n,
\end{equation*}
which is followed by $\eqref{equdefd}$. Furthermore, we have
\begin{equation*}
(\delta_{\tau,1}(e^{-\tau\rho U(x_0)}\zeta))^{\alpha}\sum_{n=1}^{\infty}G^n\zeta^n+A\sum_{n=1}^{\infty}G^n\zeta^n=(\delta_{\tau,1}(e^{-\tau\rho U(x_0)}\zeta))^{\alpha-1}\frac{G^0e^{-\tau\rho U(x_0)}\zeta}{\tau}.
\end{equation*}
Using Cauchy's integral formula yields
\begin{equation}\label{betatau}
	\begin{aligned}
		G^n=&\frac{1}{2\pi\mathbf{i}}\int_{|\zeta|=\xi_\tau}\zeta^{-n-1}((\delta_{\tau,1}(e^{-\tau\rho U(x_0)}\zeta))^{\alpha}+A)^{-1}(\delta_{\tau,1}(e^{-\tau\rho U(x_0)}\zeta))^{\alpha-1}\frac{G^0e^{-\tau\rho U(x_0)}\zeta}{\tau}d\zeta\\
		=&\frac{1}{2\pi\mathbf{i}}\int_{\Gamma^\tau}e^{zt_n}((\beta_{\tau,1}(z))^{\alpha}+A)^{-1}(\beta_{\tau,1}(z))^{\alpha-1}e^{-\tau(z+\rho U(x_0))}G^0dz,
	\end{aligned}
\end{equation}
where $\xi_\tau=e^{-\tau(\kappa+1)}$, $\Gamma^\tau=\{z=\kappa+1+\mathbf{i}y:y\in\mathbb{R}~{\rm and}~|y|\leq \pi/\tau\}$, $\beta_{\tau,1}(z)=\delta_{\tau,1}(e^{-\tau(z+\rho U(x_0))})$, and the second equality follows by taking $\zeta=e^{-z\tau}$. Deforming the contour $\Gamma^\tau$ to
$\Gamma^\tau_{\theta,\kappa}=\{z\in \mathbb{C}:\kappa\leq |z|\leq\frac{\pi}{\tau\sin(\theta)},|\arg z|=\theta\}\cup\{z\in \mathbb{C}:|z|=\kappa,|\arg z|\leq\theta\}$, one has
\begin{equation}\label{equsemisolrep}
	G^n=\frac{1}{2\pi\mathbf{i}}\int_{\Gamma_{\theta,\kappa}^{\tau}}e^{zt_n}((\beta_{\tau,1}(z))^{\alpha}+A)^{-1}(\beta_{\tau,1}(z))^{\alpha-1}e^{-\tau(z+\rho U(x_0))}G^0dz.
\end{equation}

Next, we provide a lemma about $\beta_{\tau,1}(z)$ defined in \eqref{betatau}.
\begin{lemma}[\cite{Deng:17}]\label{lemmabetatau}
	Let $U(x_0)$ be bounded in $\bar{\Omega}$. By choosing $\theta\in(\frac{\pi}{2},\pi)$ sufficiently close to $\frac{\pi}{2}$ and $\kappa>0$ sufficiently large $($depending on $|\rho|\|U(x_0)\|_{L^{\infty}(\bar{\Omega})}$$)$, there exists a positive constant $\tau_{*}$ $($depending on $\theta$ and $\kappa$$)$ such that the following estimates hold when $\tau\leq \tau_{*}$:
	\begin{enumerate}[(1)]
		\item For ${{z}}\in \Sigma^{\tau}_{\theta,\kappa}$, we have $\beta_{\tau,1}({{z}}) \in \Sigma_{\frac{3\pi}{4},C\kappa}$, and
		\begin{equation*}
		C_1|{{z}}|\leq|\beta_{\tau,1}({{z}})|\leq C_2|{{z}}|.
		\end{equation*}
		
		\item The operator $((\beta_{\tau,1}({{z}}))^\alpha+A)^{-1}:L^2(\Omega)\rightarrow L^2(\Omega)$ is well-defined, bounded, and analytic with respect to $z\in \Sigma^{\tau}_{\theta,\kappa}$, satisfying
		\begin{equation*}
		\|A((\beta_{\tau,1}(z))^\alpha+A)^{-1}\|\leq C~~~~~for~all ~~{{z}} \in \Sigma^{\tau}_{\theta,\kappa},
		\end{equation*}
		\begin{equation*}
		\|((\beta_{\tau,1}({{z}}))^\alpha+A)^{-1}\|\leq C|{{z}}|^{-\alpha}~~~~~for~all~~{{z}} \in \Sigma^{\tau}_{\theta,\kappa},
		\end{equation*}
where $\Sigma^{\tau}_{\theta,\kappa}=\{z\in\mathbb{C}:|z|\geq\kappa, |\arg z|\leq \theta, |Im(z)|\leq \frac{\pi}{\tau}, Re(z)\leq \kappa+1\}$. Here, $Im(z)$ means the imaginary part of $z$ and $Re(z)$ the real part of $z$.
\item For the real number $\gamma$, the following estimate holds
\begin{equation*}
	|(\beta(z))^{\gamma}-(\beta_{\tau,1}(z))^{\gamma}|\leq C\tau|z|^{\gamma+1},\quad z\in\Gamma^{\tau}_{\theta,\kappa}.
\end{equation*}
	\end{enumerate}
\end{lemma}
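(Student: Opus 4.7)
The plan is to reduce all three parts of the lemma to the corresponding facts for the scalar backward Euler symbol $\phi_\tau(w) := (1-e^{-\tau w})/\tau$ evaluated at $w = \beta(z)$, since by construction $\beta_{\tau,1}(z) = \phi_\tau(\beta(z))$. The results of Lemma \ref{lemmaBeta} then convert statements in the variable $w$ back to statements in the variable $z$. Throughout I will assume, as already done in that lemma, that $\theta$ is chosen close to $\pi/2$ and $\kappa$ large compared to $|\rho|\|U(x_0)\|_{L^\infty(\bar\Omega)}$, and will require $\tau$ small enough so that several smallness conditions involving $\tau\kappa$ and $\tau|\rho|\|U\|_\infty$ are satisfied.

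For part (1), I would first note that $z \in \Sigma^\tau_{\theta,\kappa}$ implies $|\mathrm{Im}(\tau\beta(z))| \le \pi + \tau|\rho|\|U\|_\infty$, which, after shrinking $\tau_*$ and enlarging the sector slightly, still falls in a strip where the standard estimates for $\phi_\tau$ apply. From the scalar Taylor expansion
\begin{equation*}
\phi_\tau(w) = w - \tfrac{\tau}{2}w^2 + O(\tau^2 w^3),
\end{equation*}
valid uniformly for $w$ in a sector of half-angle less than $3\pi/4$ with $|\tau w|$ bounded, I would deduce $C_1|w| \le |\phi_\tau(w)| \le C_2|w|$ and the sector containment $\phi_\tau(w)\in \Sigma_{3\pi/4}$. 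Composing with $\beta$ and invoking $C_1|z|\le|\beta(z)|\le C_2|z|$ from Lemma \ref{lemmaBeta} yields the claim, with the lower bound on $|\beta_{\tau,1}(z)|$ giving the modulus bound $|\beta_{\tau,1}(z)|\ge C\kappa$ that places it in $\Sigma_{3\pi/4,C\kappa}$.

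For part (2), once (1) is established, the argument runs parallel to the proof of Lemma \ref{lemmaBeta}(2): because $(\beta_{\tau,1}(z))^\alpha$ lies in the sector $\Sigma_{3\pi\alpha/4}$ and $-A$ is self-adjoint negative with spectrum in $(-\infty,-\lambda_1]$, the spectra are disjoint and the resolvent set contains $(\beta_{\tau,1}(z))^\alpha$. The two resolvent bounds then follow from the spectral representation in terms of eigenfunctions $\varphi_j$, exactly as in the proof of \eqref{chapter4section2_2prop1conc21}–\eqref{chapter4section2_2prop1conc22}, with $(\beta(z))^\alpha$ replaced by $(\beta_{\tau,1}(z))^\alpha$ and using $|(\beta_{\tau,1}(z))^\alpha|\ge C|z|^\alpha$ from (1).

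For part (3), the scalar expansion above gives $|\phi_\tau(w) - w| \le C\tau|w|^2$, so $|\beta_{\tau,1}(z) - \beta(z)| \le C\tau|z|^2$. To pass to the $\gamma$-th power I would use the elementary bound $|w_1^\gamma - w_2^\gamma| \le C\max(|w_1|,|w_2|)^{\gamma-1}|w_1 - w_2|$, valid whenever the segment $[w_1,w_2]$ avoids the branch cut. Part (1) guarantees this by keeping both $\beta(z)$ and $\beta_{\tau,1}(z)$ inside $\Sigma_{3\pi/4}$ with comparable moduli, and combining the two estimates yields the stated $C\tau|z|^{\gamma+1}$ bound. The main technical obstacle is the uniform sector/strip bookkeeping: the shift $z\mapsto z+\rho U(x_0)$ and the nonlinear backward Euler map $\phi_\tau$ must not together push the image across the branch cut or outside the strip $|\mathrm{Im}(\tau w)|\le \pi$, and making this tight requires carefully coordinating the choices of $\theta$, $\kappa$, and $\tau_*$ so that all constants depend only on $|\rho|\|U\|_{L^\infty(\bar\Omega)}$.
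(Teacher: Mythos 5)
This lemma is quoted from \cite{Deng:17} and the paper does not reprove it; the closest in-paper comparison is the proof of the analogous second-order result, Lemma \ref{lemmabetatauO2}, together with Lemma \ref{lemO200}. Your overall strategy --- reduce everything to the scalar symbol $\phi_\tau(w)=(1-e^{-\tau w})/\tau$ at $w=\beta(z)$, then invoke Lemma \ref{lemmaBeta} and a resolvent estimate --- is the same strategy the paper uses there, and your parts (2) and (3) are essentially sound modulo part (1).

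The genuine gap is in part (1). Your two-sided bound and the sector containment are both derived from the Taylor expansion $\phi_\tau(w)=w-\tfrac{\tau}{2}w^2+O(\tau^2w^3)$ ``with $|\tau w|$ bounded.'' Boundedness is not enough: on $\Sigma^{\tau}_{\theta,\kappa}$ one only has $\tau|\beta(z)|\leq \tfrac{5}{2}\pi$ (see \eqref{equlemO2eq2}), so $\tau|w|$ genuinely reaches values of order one and larger, where $\phi_\tau(w)=w\bigl(1-\tfrac{\tau w}{2}+\cdots\bigr)$ has a prefactor that is not close to $1$ and the expansion gives neither the lower bound $|\phi_\tau(w)|\geq C|w|$ nor $\arg\phi_\tau(w)\leq \tfrac{3\pi}{4}$. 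This is exactly why the paper's proof of Lemma \ref{lemmabetatauO2} splits into two regimes --- Taylor's expansion only when $\tau|\beta(z)|$ is small, and the separate fact $|\tau\delta_{\tau,2}(e^{-\tau\beta(z)})|\geq C$ when it is not --- and why it needs the entirely separate Lemma \ref{lemO200}, with its explicit computation of $\cot\bigl(\arg\delta_{\tau,2}(e^{-z\tau})\bigr)$ and a perturbation argument near $\arg z=\pi/2$, just to establish the sector containment. You would need the corresponding two-case argument for $\delta_{\tau,1}$ (for the lower bound, e.g.\ $|1-e^{-\tau\beta(z)}|\geq C$ away from the zeros of $1-e^{-\zeta}$, which is where the strip restriction $|Im(z)|\leq\pi/\tau$ and the choice of $\tau_*$ enter; for the sector, the fact that $1-\zeta$ maps the unit disk into $\Sigma_{\pi/2}$ plus a perturbation for $\mathrm{Re}\,\beta(z)$ slightly negative).

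A smaller point in part (3): membership of both $\beta(z)$ and $\beta_{\tau,1}(z)$ in $\Sigma_{3\pi/4}$ with comparable moduli does \emph{not} by itself keep the segment joining them off the branch cut (two points at arguments $\pm\tfrac{3\pi}{4}$ are joined by a segment crossing the negative real axis). The bound is rescued because $|\beta_{\tau,1}(z)-\beta(z)|\leq C\tau|z|^2$ makes the two points close relative to $|z|$ when $\tau|z|$ is small, while for $\tau|z|\gtrsim 1$ one should instead use the crude estimate $|(\beta(z))^\gamma-(\beta_{\tau,1}(z))^\gamma|\leq C|z|^\gamma\leq C\tau|z|^{\gamma+1}$ --- again the same case split the paper performs at $\tau|\beta(z)|=1/2$ in the proof of Lemma \ref{lemmabetatauO2}(3).
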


\begin{theorem}\label{thmsemierror}
	Let $G(x_0,\rho,t)$ and $G^n$  be the solutions of Eqs. \eqref{eqretosoleq} and \eqref{equsemischeme} respectively and assume $G_0\in L^2(\Omega)$. Then we obtain
		\begin{equation*}
	\|G(x_0,\rho,t_n)-G^n\|_{L^2(\Omega)}\leq Ct_n^{-1}\tau\|G_0\|_{L^2(\Omega)}.
	\end{equation*}
\end{theorem}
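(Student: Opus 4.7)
The plan is to compare the contour-integral representations \eqref{equsolrep} and \eqref{equsemisolrep} of $G(t_n)$ and $G^n$ respectively, exploiting the fact that $\Gamma^{\tau}_{\theta,\kappa}$ is just a truncation of $\Gamma_{\theta,\kappa}$ combined with a small deformation. Concretely, first split the continuous contour into the truncated part $\Gamma^{\tau}_{\theta,\kappa}$ and the tail $\Gamma^{\tau,c}_{\theta,\kappa}:=\Gamma_{\theta,\kappa}\setminus\Gamma^{\tau}_{\theta,\kappa}$. Writing $K(z):=((\beta(z))^{\alpha}+A)^{-1}(\beta(z))^{\alpha-1}$ and $K_{\tau}(z):=((\beta_{\tau,1}(z))^{\alpha}+A)^{-1}(\beta_{\tau,1}(z))^{\alpha-1}$, the error splits as
\begin{equation*}
G(t_n)-G^n=\frac{1}{2\pi\mathbf{i}}\int_{\Gamma^{\tau}_{\theta,\kappa}}e^{zt_n}\bigl[K(z)-K_{\tau}(z)e^{-\tau(z+\rho U(x_0))}\bigr]G_0\,dz+\frac{1}{2\pi\mathbf{i}}\int_{\Gamma^{\tau,c}_{\theta,\kappa}}e^{zt_n}K(z)G_0\,dz.
\end{equation*}

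For the tail integral, Lemma \ref{lemmaBeta} gives $\|K(z)\|\leq C|z|^{-1}$, so on $\Gamma^{\tau,c}_{\theta,\kappa}$ with $|z|\geq\pi/(\tau\sin\theta)$ the integral is bounded by $C\int_{\pi/(\tau\sin\theta)}^{\infty}e^{r\cos(\theta)t_n}r^{-1}\,dr\|G_0\|_{L^2(\Omega)}\leq C\tau t_n^{-1}\|G_0\|_{L^2(\Omega)}$, after integrating out $r^{-1}\leq(\tau\sin\theta)/\pi$ and using that the resulting exponential integral contributes $1/t_n$.

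For the main piece I would decompose the bracket as
\begin{equation*}
K(z)-K_{\tau}(z)e^{-\tau(z+\rho U(x_0))}=\bigl[K(z)-K_{\tau}(z)\bigr]+K_{\tau}(z)\bigl[1-e^{-\tau(z+\rho U(x_0))}\bigr],
\end{equation*}
and further split the first summand by adding and subtracting $((\beta(z))^{\alpha}+A)^{-1}(\beta_{\tau,1}(z))^{\alpha-1}$, so that the resolvent identity
\begin{equation*}
((\beta(z))^{\alpha}+A)^{-1}-((\beta_{\tau,1}(z))^{\alpha}+A)^{-1}=((\beta_{\tau,1}(z))^{\alpha}+A)^{-1}\bigl[(\beta_{\tau,1}(z))^{\alpha}-(\beta(z))^{\alpha}\bigr]((\beta(z))^{\alpha}+A)^{-1}
\end{equation*}
applies. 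Then each factor is estimated via Lemmas \ref{lemmaBeta} and \ref{lemmabetatau}: the resolvent norms contribute $|z|^{-\alpha}$, the fractional-power differences are controlled by $|(\beta(z))^{\gamma}-(\beta_{\tau,1}(z))^{\gamma}|\leq C\tau|z|^{\gamma+1}$ with $\gamma=\alpha,\alpha-1$, and $|1-e^{-\tau(z+\rho U(x_0))}|\leq C\tau|z|$ on $\Gamma^{\tau}_{\theta,\kappa}$ since $\tau(z+\rho U(x_0))$ stays in a fixed sector of bounded imaginary part. Collecting all contributions yields $\|K(z)-K_{\tau}(z)e^{-\tau(z+\rho U(x_0))}\|\leq C\tau$ uniformly on $\Gamma^{\tau}_{\theta,\kappa}$.

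It remains to integrate this uniform bound against $|e^{zt_n}|$ along $\Gamma^{\tau}_{\theta,\kappa}$. The standard device is to take the arc radius $\kappa=\max\{\kappa_{0},1/t_n\}$, which is permissible by the analyticity stated in Lemma \ref{lemmabetatau}; the straight rays then contribute $\int_{\kappa}^{\pi/(\tau\sin\theta)}e^{r\cos(\theta)t_n}\,dr\leq C/t_n$ and the arc contributes $\int_{-\theta}^{\theta}\kappa e^{\kappa\cos(\varphi)t_n}\,d\varphi\leq Ce^{1}/t_n$, giving the final bound $Ct_n^{-1}\tau\|G_0\|_{L^2(\Omega)}$. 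I expect the main obstacle to be the careful bookkeeping around the non-commutativity $A\beta(z)\neq\beta(z)A$ when splitting $K(z)-K_{\tau}(z)$, which forces the resolvent identity to be applied in the correct order so that the $|z|^{-1}$ gain from $(\beta(z))^{\alpha-1}$ is not lost; everything else is routine exponential-integral estimation.
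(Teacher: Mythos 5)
Your proposal is correct and follows essentially the same route as the paper: split off the tail of the contour, bound it by $Ct_n^{-1}\tau$ using $\|K(z)\|\leq C|z|^{-1}$, and on $\Gamma^{\tau}_{\theta,\kappa}$ obtain a uniform $O(\tau)$ bound for the kernel difference via the resolvent identity, the estimate $|(\beta(z))^{\gamma}-(\beta_{\tau,1}(z))^{\gamma}|\leq C\tau|z|^{\gamma+1}$, and $|1-e^{-\tau\beta(z)}|\leq C\tau|z|$, before integrating $|e^{zt_n}|$ to get the $t_n^{-1}$ factor. The only cosmetic difference is the order of the intermediate term you insert (you swap the $(\beta(z))^{\alpha-1}$ factor first, the paper swaps the resolvent first), which changes nothing in the estimates.
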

\begin{proof}
	Subtracting \eqref{equsemisolrep} from the inverse Laplace transform of \eqref{equsolrep}, we have
	\begin{equation*}
		\begin{aligned}
				&\|G(x_0,\rho,t_n)-G^n\|_{L^2(\Omega)}\\
				\leq& C\left \|\int_{\Gamma_{\theta,\kappa}}e^{zt_n}((\beta(z))^{\alpha}+A)^{-1}(\beta(z))^{\alpha-1}G_{0}dz\right .\\
				&\qquad\left .-\int_{\Gamma_{\theta,\kappa}^{\tau}}e^{zt_n}((\beta_{\tau,1}(z))^{\alpha}+A)^{-1}(\beta_{\tau,1}(z))^{\alpha-1}e^{-\tau(z+\rho U(x_0))}G^0dz\right \|_{L^2(\Omega)}\\
				\leq& C\left\|\int_{\Gamma_{\theta,\kappa}\backslash\Gamma_{\theta,\kappa}^{\tau}}e^{zt_n}((\beta(z))^{\alpha}+A)^{-1}(\beta(z))^{\alpha-1}dz\right \|\|G_{0}\|_{L^2(\Omega)}\\
				&+C\Big \|\int_{\Gamma_{\theta,\kappa}^{\tau}}e^{zt_n}\big (((\beta(z))^{\alpha}+A)^{-1}(\beta(z))^{\alpha-1}\\
				&\qquad\qquad-((\beta_{\tau,1}(z))^{\alpha}+A)^{-1}(\beta_{\tau,1}(z))^{\alpha-1}e^{-\tau(z+\rho U(x_0))}\big)dz\Big \|\|G_{0}\|_{L^2(\Omega)}\\
				\leq&C(\uppercase\expandafter{\romannumeral1}+\uppercase\expandafter{\romannumeral2})\|G_{0}\|_{L^2(\Omega)}.
		\end{aligned}
	\end{equation*}
	For $\uppercase\expandafter{\romannumeral1}$, using Lemma \ref{lemmaBeta}, there is
	\begin{equation*}
		\begin{aligned}
			\uppercase\expandafter{\romannumeral1}\leq&\int_{\Gamma_{\theta,\kappa}\backslash\Gamma_{\theta,\kappa}^{\tau}}|e^{zt_n}||z|^{-1}|dz|
			\leq C\tau\int_{\frac{\pi}{\tau\sin(\theta)}}^{\infty}e^{t_nr\cos(\theta)}dr
			\leq Ct_n^{-1}\tau.
		\end{aligned}
	\end{equation*}
	As for $\uppercase\expandafter{\romannumeral2}$, one can split it into
	\begin{equation*}
		\begin{aligned}
			\uppercase\expandafter{\romannumeral2}\leq&\left \|\int_{\Gamma_{\theta,\kappa}^{\tau}}e^{zt_n}\left (((\beta(z))^{\alpha}+A)^{-1}(\beta(z))^{\alpha-1}-((\beta_{\tau,1}(z))^{\alpha}+A)^{-1}(\beta(z))^{\alpha-1}\right)dz\right \|\\
			&+\left \|\int_{\Gamma_{\theta,\kappa}^{\tau}}e^{zt_n}\left (((\beta_{\tau,1}(z))^{\alpha}+A)^{-1}(\beta(z))^{\alpha-1}-((\beta_{\tau,1}(z))^{\alpha}+A)^{-1}(\beta_{\tau,1}(z))^{\alpha-1}\right)dz\right \|\\
			&+\left \|\int_{\Gamma_{\theta,\kappa}^{\tau}}e^{zt_n}((\beta_{\tau,1}(z))^{\alpha}+A)^{-1}(\beta_{\tau,1}(z))^{\alpha-1}\left (1-e^{-\tau(z+\rho U(x_0))}\right)dz\right \|.
		\end{aligned}
	\end{equation*}
	Then by Lemmas \ref{lemmaBeta}, \ref{lemmabetatau} and the fact
	\begin{equation*}
		\begin{aligned}
			&\left \|((\beta(z))^{\alpha}+A)^{-1}(\beta(z))^{\alpha-1}-((\beta_{\tau,1}(z))^{\alpha}+A)^{-1}(\beta(z))^{\alpha-1}\right\|\\
			= &\left \|((\beta(z))^{\alpha}+A)^{-1}((\beta_{\tau,1}(z))^{\alpha}-(\beta(z))^{\alpha})((\beta_{\tau,1}(z))^{\alpha}+A)^{-1}(\beta(z))^{\alpha-1}\right\|
			\leq C\tau,
		\end{aligned}
	\end{equation*}
	it has
	\begin{equation*}
		\begin{aligned}
			\uppercase\expandafter{\romannumeral2}\leq & C\tau \int_{\Gamma^\tau_{\theta,\kappa}}|e^{zt_n}||dz|\leq C\tau t_n^{-1}.
		\end{aligned}
	\end{equation*}
	Thus
	\begin{equation*}
		\|G(x_0,\rho,t_n)-G^n\|_{L^2(\Omega)}\leq Ct_n^{-1}\tau\|G_0\|_{L^2(\Omega)}.
	\end{equation*}
\end{proof}

\subsection{Second-order backward difference scheme and error estimate}
In this subsection, we use second-order backward difference convolution quadrature to discretize the time fractional substantial derivative and obtain the second-order accuracy in time. Similarly, introduce $G^n$ as the numerical approximation of the solution $G(x_0,\rho,t_n)$, and let
\begin{equation}\label{equdefd2n}
\delta_{\tau,2}(\zeta)=\frac{(1-\zeta)+(1-\zeta)^2/2}{\tau }\quad and\quad   \nu(\zeta)=\left(\frac{3-\zeta}{2(1-\zeta)}\right )\zeta=\zeta\left (\frac{3}{2}+\sum\limits_{n=1}^{\infty}\zeta^n\right ).
\end{equation}

According to \eqref{equsolrep}, we have
\begin{equation}\label{equsolrep2}
\tilde{G}(z)-(\beta(z))^{-1}G_0=-((\beta(z))^{\alpha}+A)^{-1}A(\beta(z))^{-1}G_{0}.
\end{equation}
 Using $\beta_{\tau,2}(z):=\delta_{\tau,2}(e^{-\beta(z)\tau})$, $\tau\sum\limits_{n=1}^{\infty}(G^n-e^{-t_n\rho U(x_0)}G^0)e^{-zt_n}$, and $\tau\nu(e^{-\beta(z)\tau})$ to approximate $\beta(z)$, $\tilde{G}(z)-(\beta(z))^{-1}G_0$, and $(\beta(z))^{-1}$ respectively, we have
 \begin{equation*}
 	\begin{aligned}
 	\sum_{n=1}^{\infty}(G^{n}-e^{-t_{n}\rho U(x_0)}G^0)e^{-zt_n}=-((\delta_{\tau,2
 	}(e^{-\beta(z)\tau}))^{\alpha}+A)^{-1}A\nu(e^{-\beta(z)\tau})G^0.
 	\end{aligned}
 \end{equation*}
Thus
\begin{equation*}
\begin{aligned}
((\delta_{\tau,2
}(e^{-\beta(z)\tau}))^{\alpha}+A)\sum_{n=1}^{\infty}(G^{n}-e^{-t_{n}\rho U(x_0)}G^0)e^{-zt_n}=-A\nu(e^{-\beta(z)\tau})G^0.
\end{aligned}
\end{equation*}
 By Cauchy's integral formula, there exists the second-order temporal semi-discrete scheme
\begin{equation}\label{equsemischemeO2}
\left\{
\begin{aligned}
&d^{\alpha,2}_{0}e^{-t_{0}\rho U(x_0)}G^{1}+A G^{1}+\frac{1}{2}Ae^{-t_{1}\rho U(x_0)}G^0=e^{-t_{1}\rho U(x_0)}d^{\alpha,2}_{0}G^0,\quad x_0\in \Omega,\\	
&\sum_{i=0}^{n-1}d^{\alpha,2}_{i}e^{-t_{i}\rho U(x_0)}G^{n-i}+A G^n=e^{-t_{n}\rho U(x_0)}\sum_{i=0}^{n-1}d^{\alpha,2}_{i}G^0,\quad\,\, x_0\in \Omega,\quad n=2,\ldots,N,\\	
&G^0=G_0,\qquad\qquad\qquad\qquad\qquad\qquad\qquad\qquad\qquad\quad  x_0\in \Omega,\\
&G^n(x_0)=0,\qquad\qquad\quad\qquad\qquad\qquad\qquad\qquad\qquad\quad x_0\in \partial \Omega,\quad n=1,\ldots,N,
\end{aligned}
\right.
\end{equation}
where
\begin{equation}\label{equdefdO2}
(\delta_{\tau,2}(\zeta))^{\alpha}=\sum_{i=0}^{\infty}d^{\alpha,2}_{i}\zeta^i.
\end{equation}

Multiplying $\zeta^1$ and $\zeta^n$ on both sides of the first and second formulas of \eqref{equsemischemeO2} respectively and summing them lead to
\begin{equation}\label{equsemimidO2}
\begin{aligned}
&\sum_{n=1}^{\infty}\sum_{i=0}^{n-1}d^{\alpha,2}_{i}e^{-t_{i}\rho U(x_0)}(G^{n-i}-e^{-t_{n-i}\rho U(x_0)}G^0)\zeta^n+\sum_{n=1}^{\infty}A( G^n-e^{-t_{n}\rho U(x_0)}G^0)\zeta^n\\
&=-\left(\sum_{n=1}^{\infty}Ae^{-t_{n}\rho U(x_0)}G^0\zeta^n+\frac{1}{2}Ae^{-t_{1}\rho U(x_0)}G^{0}\zeta\right).	
\end{aligned}
\end{equation}
According to \eqref{equdefd2n} and \eqref{equdefdO2}, after some simple calculations, we get
\begin{equation*}
\begin{aligned}
& (\delta_{\tau,2}(e^{-\tau\rho U(x_0)}\zeta))^{\alpha}\sum_{n=1}^{\infty}(G^{n}-e^{-t_{n}\rho U(x_0)}G^0)\zeta^n+A\sum_{n=1}^{\infty}(G^{n}-e^{-t_{n}\rho U(x_0)}G^0)\zeta^n
\\
&
=-A\nu(e^{-\tau\rho U(x_0)}\zeta)G^0,
\end{aligned}
\end{equation*}
which can be further written as
\begin{equation*}
\sum_{n=1}^{\infty}(G^{n}-e^{-t_{n}\rho U(x_0)}G^0)\zeta^n=-((\delta_{\tau,2
}(e^{-\tau\rho U(x_0)}\zeta))^{\alpha}+A)^{-1}A\nu(e^{-\tau\rho U(x_0)}\zeta)G^0.
\end{equation*}
By Cauchy's integral formula, there is
\begin{equation}\label{betatauO2}
\begin{aligned}
G^n-e^{-t_{n}\rho U(x_0)}G^0=&-\frac{1}{2\pi\mathbf{i}}\int_{|\zeta|=\xi_\tau}\zeta^{-n-1}((\delta_{\tau,2}(e^{-\tau\rho U(x_0)}\zeta))^{\alpha}+A)^{-1}A\nu(e^{-\tau\rho U(x_0)}\zeta)G^0d\zeta\\
=&-\frac{\tau}{2\pi\mathbf{i}}\int_{\Gamma^\tau}e^{zt_n}((\beta_{\tau,2}(z))^{\alpha}+A)^{-1}A\nu(e^{-\tau\beta(z)})G^0dz,
\end{aligned}
\end{equation}
where $\xi_\tau=e^{-\tau(\kappa+1)}$, $\Gamma^\tau=\{z=\kappa+1+\mathbf{i}y:y\in\mathbb{R}~{\rm and}~|y|\leq \pi/\tau\}$, $\beta_{\tau,2}(z)=\delta_{\tau,2}(e^{-\beta(z)\tau})$, and the second equality follows by taking $\zeta=e^{-z\tau}$. Introduce $\mu(\zeta)=\tau\delta_{\tau,2}(\zeta)\nu(\zeta)=\frac{\zeta(3-\zeta)^2}{4}$ and deform the contour $\Gamma^\tau$ to
$\Gamma^\tau_{\theta,\kappa}=\{z\in \mathbb{C}:\kappa\leq |z|\leq\frac{\pi}{\tau\sin(\theta)},|\arg z|=\theta\}\cup\{z\in \mathbb{C}:|z|=\kappa,|\arg z|\leq\theta\}$. Then there exists
\begin{equation}\label{equsemisolrepO2}
G^n-e^{-t_{n}\rho U(x_0)}G^0=-\frac{1}{2\pi\mathbf{i}}\int_{\Gamma_{\theta,\kappa}^\tau}e^{zt_n}((\beta_{\tau,2}(z))^{\alpha}+A)^{-1}A(\beta_{\tau,2}(z))^{-1}\mu(e^{-\tau\beta(z)})G^0dz.
\end{equation}

Now, we provide a lemma about $\beta_{\tau,2}(z)$ defined in \eqref{betatauO2}.
\begin{lemma}\label{lemmabetatauO2}
	Let $U(x_0)$ be bounded in $\bar{\Omega}$. By choosing $\theta\in(\frac{\pi}{2},\pi)$ sufficiently close to $\frac{\pi}{2}$ and $\kappa>0$ sufficiently large $($depending on $|\rho|\|U(x_0)\|_{L^{\infty}(\bar{\Omega})}$$)$, there exists a positive constant $\tau_{*}$ $($depending on $\theta$ and $\kappa$$)$ such that the following estimates hold when $\tau\leq \tau_{*}$:
	\begin{enumerate}[(1)]
		\item For ${{z}}\in \Sigma^{\tau}_{\theta,\kappa}$, we have $\beta_{\tau,2}({{z}}) \in \Sigma_{\frac{3\pi}{4},C\kappa}$, and
		\begin{equation*}
		C_1|{{z}}|\leq|\beta_{\tau,2}({{z}})|\leq C_2|{{z}}|.
		\end{equation*}
		
		\item The operator $((\beta_{\tau,2}({{z}}))^\alpha+A)^{-1}:L^2(\Omega)\rightarrow L^2(\Omega)$ is well-defined, bounded, and analytic with respect to $z\in \Sigma^{\tau}_{\theta,\kappa}$, satisfying
		\begin{equation*}
		\|A((\beta_{\tau,2}(z))^\alpha+A)^{-1}\|\leq C~~~~~for~all ~~{{z}} \in \Sigma^{\tau}_{\theta,\kappa},
		\end{equation*}
		\begin{equation*}
		\|((\beta_{\tau,2}({{z}}))^\alpha+A)^{-1}\|\leq C|{{z}}|^{-\alpha}~~~~~for~all~~{{z}} \in \Sigma^{\tau}_{\theta,\kappa},
		\end{equation*}
		where $\Sigma^{\tau}_{\theta,\kappa}=\{z\in\mathbb{C}:|z|\geq\kappa,|\arg z|\leq \theta, |Im(z)|\leq \frac{\pi}{\tau},Re(z)\leq \kappa+1\}$. Here, $Im(z)$ means the imaginary part of $z$ and $Re(z)$ the real part of $z$.
		\item For the real number $\gamma$, there is
		\begin{equation*}
		|(\beta(z))^{\gamma}-(\beta_{\tau,2}(z))^{\gamma}|\leq C\tau^2|z|^{\gamma+2},\quad z\in\Gamma^{\tau}_{\theta,\kappa}.
		\end{equation*}
	\end{enumerate}
\end{lemma}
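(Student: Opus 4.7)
The plan is to prove this lemma in parallel with Lemma \ref{lemmabetatau} from \cite{Deng:17}, the essential new ingredient being that $\delta_{\tau,2}$ is a second-order accurate symbol whose stability region is smaller than that of backward Euler, so the contour parameters $\theta$ and $\kappa$ must be chosen more carefully. Throughout, I would use the standing observation from Lemma \ref{lemmaBeta} that $\beta(z)=z+\rho U(x_0)\in\Sigma_{3\pi/4,\kappa/2}$ whenever $z\in\Sigma_{\theta,\kappa}$, together with $C_1|z|\le|\beta(z)|\le C_2|z|$. Thus the analysis reduces to studying the map $w\mapsto\delta_{\tau,2}(e^{-\tau w})$ for $w=\beta(z)$ ranging in a sector strictly inside the BDF2 stability domain, with the auxiliary constraint $|\mathrm{Im}(w)|\le C\pi/\tau$ coming from $z\in\Sigma_{\theta,\kappa}^{\tau}$.

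For part (1), I would Taylor expand: writing $\zeta=e^{-\tau w}$,
\begin{equation*}
\delta_{\tau,2}(e^{-\tau w})=\tau^{-1}\bigl[(1-e^{-\tau w})+\tfrac{1}{2}(1-e^{-\tau w})^2\bigr]=w+O(\tau^2 w^3),
\end{equation*}
uniformly for $\tau|w|\le c$ with $w$ in a fixed sector. Because $\beta(z)\in\Sigma_{3\pi/4,\kappa/2}$, choosing $\kappa$ large (so that on the bounded part of $\Gamma_{\theta,\kappa}^{\tau}$ the remainder is controlled) and $\theta$ close enough to $\pi/2$ (so that the image of the horizontal segments $|\mathrm{Im}(z)|=\pi/\tau$ lies inside the BDF2 A$(\vartheta)$-stability wedge) produces $\beta_{\tau,2}(z)\in\Sigma_{3\pi/4,C\kappa}$. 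The two-sided bound $C_1|z|\le|\beta_{\tau,2}(z)|\le C_2|z|$ then follows by combining the Taylor expansion with $|\beta(z)|\sim|z|$.

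For part (2), once part (1) is in hand we have $(\beta_{\tau,2}(z))^\alpha\in\Sigma_{3\alpha\pi/4,C\kappa^\alpha}$, and since $A=-\Delta$ is a positive selfadjoint operator on $L^2(\Omega)$, the standard resolvent estimate $\|(\lambda+A)^{-1}\|\le C|\lambda|^{-1}$ on any sector bounded away from the positive real axis yields
\begin{equation*}
\|((\beta_{\tau,2}(z))^\alpha+A)^{-1}\|\le C|\beta_{\tau,2}(z)|^{-\alpha}\le C|z|^{-\alpha},
\end{equation*}
and then $\|A((\beta_{\tau,2}(z))^\alpha+A)^{-1}\|\le 1+|\beta_{\tau,2}(z)|^\alpha\|((\beta_{\tau,2}(z))^\alpha+A)^{-1}\|\le C$. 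Analyticity in $z$ is inherited from analyticity of $\beta_{\tau,2}$.

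For part (3), the Taylor remainder in part (1) gives $|\beta(z)-\beta_{\tau,2}(z)|\le C\tau^2|\beta(z)|^3\le C\tau^2|z|^3$ on $\Gamma_{\theta,\kappa}^{\tau}$. Because both $\beta(z)$ and $\beta_{\tau,2}(z)$ lie in a common sector bounded away from the negative real axis with comparable moduli, the elementary inequality $|w^\gamma-v^\gamma|\le C|w-v|(|w|^{\gamma-1}+|v|^{\gamma-1})$ (via integration of $s\mapsto s^\gamma$ along the straight segment between $\beta(z)$ and $\beta_{\tau,2}(z)$) yields
\begin{equation*}
|(\beta(z))^\gamma-(\beta_{\tau,2}(z))^\gamma|\le C\tau^2|z|^3\cdot|z|^{\gamma-1}=C\tau^2|z|^{\gamma+2}.
\end{equation*}
The main obstacle is the first step: unlike backward Euler, BDF2 is not A-stable, so one must verify by a direct argument that the image of $\Sigma_{\theta,\kappa}^{\tau}$ under $z\mapsto\delta_{\tau,2}(e^{-\tau\beta(z)})$ stays strictly inside a sector of opening less than $\pi$, which forces the somewhat restrictive choice of $\theta$ close to $\pi/2$ and $\kappa$ sufficiently large relative to $|\rho|\|U\|_{L^\infty}$.
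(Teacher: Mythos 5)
Your overall strategy matches the paper's: Taylor expansion of $\delta_{\tau,2}(e^{-\tau\beta(z)})$ around $\beta(z)$, a sector/resolvent argument for part (2), and a perturbation bound for the powers in part (3). But there are two genuine gaps, both occurring in the regime where $\tau|z|$ is \emph{not} small. First, your expansion $\delta_{\tau,2}(e^{-\tau w})=w+O(\tau^2w^3)$ is only uniform for $\tau|w|\le c$, whereas on $\Sigma^{\tau}_{\theta,\kappa}$ one has $\tau|z|$ ranging up to about $2\pi$ (and $\tau|\beta(z)|$ up to $\tfrac{5}{2}\pi$); near the truncation $|z|=\pi/(\tau\sin\theta)$ the remainder $O(\tau^2w^3)$ is comparable to $w$ itself, so the expansion alone cannot deliver the lower bound $|\beta_{\tau,2}(z)|\ge C_1|z|$. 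Choosing $\kappa$ large does not help here --- the problem is on the far part of the rays, not the circular arc. The paper closes this by a case split: when $\tau|\beta(z)|$ exceeds a fixed constant it invokes the separate fact that $|\tau\delta_{\tau,2}(e^{-\tau\beta(z)})|\ge C$, giving $|\beta_{\tau,2}(z)|\ge C/\tau\ge C|z|$. The same case split is needed in your part (3): when $\tau|\beta(z)|$ is of order one, $|\beta(z)-\beta_{\tau,2}(z)|\le C\tau^2|\beta(z)|^3$ is not small relative to $|\beta(z)|$, so the straight segment between the two points need not stay in a sector avoiding the branch cut and your integrated mean-value inequality can fail for $\gamma<0$; the paper instead bounds $|(\beta(z))^{\gamma}-(\beta_{\tau,2}(z))^{\gamma}|\le C|z|^{\gamma}\le C\tau^2|z|^{\gamma+2}$ directly there, using $\tau|z|\ge C$.

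Second, the sector containment $\beta_{\tau,2}(z)\in\Sigma_{\pi/2+\epsilon}$ is the technical heart of the lemma, and you flag it as ``the main obstacle'' without carrying it out. The paper proves it as a standalone lemma (Lemma 3.3) via an explicit computation of $\cot\bigl(\arg\bigl(\delta_{\tau,2}(e^{-z\tau})\bigr)\bigr)$ showing nonnegativity for $\arg(z)\le\pi/2$, followed by a perturbation argument for $\arg(z)$ slightly beyond $\pi/2$ that uses a bound on $\tau|\delta_{\tau,2}'|$ and Lubich's fact that $\delta_{\tau,2}(\zeta)\in\Sigma_{\pi/2}$ for $|\zeta|\le1$, $\zeta\ne1$. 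Note also that your framing is slightly off: BDF2 \emph{is} A-stable; the difficulty is not a restricted stability wedge but that $|e^{-\tau\beta(z)}|>1$ once $\mathrm{Re}\,\beta(z)<0$ (which happens for $\arg z>\pi/2$), so Lubich's result does not apply directly and the perturbation step is unavoidable. Without this computation your proof of parts (1) and (2) is incomplete.
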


\begin{proof}
	First there are the facts \cite{Deng:17}:
	\begin{equation}\label{equlemO2eq2}
	\begin{aligned}
	&if\, \tau|\beta(z)|\geq C,~then~~ |\tau\delta_{\tau,2}(e^{-\tau\beta(z)})|\geq C;\\
	&\tau|{{z}}|\leq \tau|{Im}({{z}})|+\tau|Re(z)|\leq 2\pi,\quad for~z\in \Sigma^{\tau}_{\theta,\kappa}~and~\tau\leq \frac{\pi}{\kappa+1};\\
	&\tau|\beta(z)|\leq \tau|{{z}}|+\tau|{{\rho}}|\|{{U}}(x_0)\|_{L^{\infty}(\bar{\Omega})}\leq \frac{5}{2}\pi,\quad for~z\in \Sigma^{\tau}_{\theta,\kappa}~and~\tau\leq \frac{\pi}{\kappa+1}.\\
	\end{aligned}
	\end{equation}
	Then we prove the boundedness of $|\beta_{\tau,2}(z)|$. Choosing $\kappa\geq2|\rho|\|{{U}}(x_0)\|_{L^{\infty}(\bar{\Omega})}$ and using Taylor's expansion yield that, for $z\in\Sigma^{\tau}_{\theta,\kappa}$,
	\begin{equation*}
	\left|\beta_{\tau,2}({{z}})\right|=\left|\delta_{\tau,2}(e^{-\tau\beta(z)})\right| \leq C|\beta(z)| \leq C\left(|{{z}}|+|{{\rho}}|\|{{U}}(x_0)\|_{L^{\infty}(\bar{\Omega})}\right) \leq C(|{{z}}|+\kappa) \leq C|z|,
	\end{equation*}
	where the fact  $|z|\geq \kappa$ for $z\in\Sigma^{\tau}_{\theta,\kappa}$ is used. Thus the inequality $|\beta_{\tau,2}(z)|\leq C|z|$ holds.
	
	Next, we prove $C|{{z}}|\leq |\beta_{\tau,2}({{z}})|$ for ${{z}}\in \Sigma^{\tau}_{\theta,\kappa}$ in two cases.
	If $\tau|\beta(z)|$ is smaller than some constant, then we use Taylor's expansion (with $|\mathcal{O}(\tau\beta(z))|<\frac{1}{2}$, due to the smallness of $\tau|\beta(z)|$ assumed):
	\begin{equation*}
	\begin{aligned}
	\left|\beta_{\tau,2}({{z}})\right|=\left|\frac{3/2-2e^{-\tau \beta(z)}+e^{-2\tau \beta(z)}/2}{\tau}\right|&=|\beta(z)(1+\mathcal{O}(\tau^2 (\beta(z))^2))| \geq \frac{1}{2}|\beta(z)| \\
	& \geq \frac{1}{2}\left(|{{z}}|-|{{\rho}}|\|{{U}}(x_0)\|_{L^{\infty}(\bar{\Omega})}\right) \\
	& \geq \frac{1}{2}(|{{z}}|-\kappa / 2) \geq \frac{1}{4}|{{z}}|, \end{aligned}
	\end{equation*}
	where we have used $\kappa\geq 2|{{\rho}}|\|{{U}}(x_0)\|_{L^{\infty}(\bar{\Omega})}$ again and that $|z|\geq \kappa$ for $z\in \Sigma^{\tau}_{\theta,\kappa}$.
	
	If $\tau|\beta(z)|$ is larger than the constant, then  \eqref{equlemO2eq2} implies
	\begin{equation*}
	|\beta_{\tau,2}(z)|=\left |\delta_{\tau,2}(e^{-\tau\beta(z)})\right |\geq \frac{C}{\tau}\geq C|z|.
	\end{equation*}
	Thus, under the conditions $\kappa\geq2|\rho|\|{{U}}(x_0)\|_{L^{\infty}(\bar{\Omega})}$ and $\tau<\frac{\pi}{\kappa+1}$, we have proved that
	\begin{equation*}
	C_{1}|{{z}}| \leq\left|\beta_{\tau,2}({{z}})\right|=\left|\delta_{\tau,2}(e^{-\tau\beta(z)})\right| \leq C_{2}|{{z}}| \quad \forall {{z}} \in \Sigma^{\tau}_{\theta, \kappa},
	\end{equation*}
which leads to (according to Lemma \ref{lemO200} provided in the following)
	\begin{equation*}
	\beta_{\tau,2}(z)\in\Sigma_{\pi/2+\epsilon}.
	\end{equation*}
From $\beta_{\tau,2}(z)\geq C|z|$, we have $\beta_{\tau,2}\in \Sigma_{\pi/2+\epsilon,C|\kappa|}$, which results in the second conclusion of this Lemma by using the resolvent estimate \cite{Jin2016}.

As for the third conclusion, there is
	\begin{equation*}
	\begin{aligned}
	&|(\beta(z))^{\gamma}-(\beta_{\tau,2}(z))^{\gamma}|\\
	=&\left |(\beta(z))^{\gamma}-\left(\beta(z)+\tau^2(\beta(z))^3\int_{0}^{1}(1-s)^2e^{-s\tau\beta(z)}ds-2\tau^2(\beta(z))^3\int_{0}^{1}(1-s)^2e^{-2s\tau\beta(z)}ds\right)^{\gamma}\right |\\
	=&|(\beta(z))^\gamma|\left |1-\left (1+\tau^2(\beta(z))^{2}\int_{0}^{1}(1-s)^2e^{-s\tau\beta(z)}ds-2\tau^2(\beta(z))^2\int_{0}^{1}(1-s)^2e^{-2s\tau\beta(z)}ds\right )^{\gamma}\right |.
	\end{aligned}
	\end{equation*}
	If $\tau|\beta(z)|\leq1/2$, by Taylor's expansion, we have
	\begin{equation*}
	\left |1+\tau^2(\beta(z))^{2}\int_{0}^{1}(1-s)^2e^{-s\tau\beta(z)}ds-2\tau^2(\beta(z))^2\int_{0}^{1}(1-s)^2e^{-2s\tau\beta(z)}ds\right |^{\gamma}=1+\mathcal{O}(\tau^2|\beta(z)|^2).
	\end{equation*}
	So,
	\begin{equation*}
	|(\beta(z))^{\gamma}-(\beta_{\tau,2}(z))^{\gamma}|\leq|\beta(z)|^{\gamma}C\tau^2|\beta(z)|^{2}= C\tau^2|\beta(z)|^{\gamma+2}.
	\end{equation*}
	As for $\tau|\beta(z)|>1/2$, we have
	\begin{equation*}
	\begin{aligned}
	&\tau|z|\geq C\tau|\beta_{\tau,2}(z)|\geq C,\qquad \forall z\in\Gamma^{\tau}_{\theta,\kappa},\\
	&|(\beta(z))^{\gamma}-(\beta_{\tau,2}(z))^{\gamma}|\leq C|z|^{\gamma}\leq C\tau^2|z|^{\gamma+2},\qquad \forall z\in\Gamma^{\tau}_{\theta,\kappa}.\\
	\end{aligned}
	\end{equation*}
	Thus the third conclusion is reached.
\end{proof}

Next we provide a lemma about $\delta_{\tau,2}(e^{-z\tau})$ defined in \eqref{equdefd2n}.

\begin{lemma}\label{lemO200}
	Let $U(x_{0})$ be bounded in $\bar{\Omega}$  and $L=|{{\rho}}|\|U({{x_{0}}})\|_{L^{\infty}(\bar{\Omega})}$. There exist positive constants $\theta_{0}\in\left (\frac{\pi}{2},\frac{5\pi}{8}\right )$, $\tau_0$, and $c_0$ such that if $\theta\in\left(\frac{\pi}{2},\theta_0\right )$ and $\tau\in (0,\tau_0]$, then
	\begin{equation}\label{equlem00O2con}
		\begin{aligned}
			\delta_{\tau,2}(e^{-z\tau})\in \Sigma_{\pi/2+\epsilon}, \quad if ~~|{{z}}|\neq 0,~~|\arg({{z}})|\leq \theta,~~and~~|{Im}({{z}})|\leq \frac{\pi}{\tau}+L.
		\end{aligned}
	\end{equation}
	Here $Im(z)$ means the imaginary part of $z$.
\end{lemma}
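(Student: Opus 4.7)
The plan is to partition the admissible parameter region into two pieces according to the size of $\tau|z|$ and to treat each by a different technique. Throughout I use the factorisation $\delta_{\tau,2}(\zeta)=\tfrac{1}{2\tau}(1-\zeta)(3-\zeta)$, which makes the argument of $\delta_{\tau,2}(e^{-z\tau})$ decompose as $\arg(1-e^{-z\tau})+\arg(3-e^{-z\tau})$.

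\textbf{Small $\tau|z|$ regime.} For $\tau|z|\leq c_0$ with $c_0$ small, Taylor-expanding $e^{-\tau z}$ and $e^{-2\tau z}$ about $0$ and collecting terms gives $\delta_{\tau,2}(e^{-z\tau})=z\bigl(1+\mathcal{O}((\tau z)^2)\bigr)$. The multiplicative factor therefore has modulus close to $1$ and argument close to $0$; concretely, $|\arg(1+\mathcal{O}((\tau z)^2))|\leq\epsilon/2$ once $c_0$ is chosen small enough. Picking $\theta_0\in(\pi/2,\pi/2+\epsilon/2]$, this yields $|\arg\delta_{\tau,2}(e^{-z\tau})|\leq|\arg z|+\epsilon/2\leq\theta_0+\epsilon/2\leq\pi/2+\epsilon$, which establishes the claim on this piece.

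\textbf{Moderate $\tau|z|$ regime.} When $\tau|z|\geq c_0$, the bound $|\mathrm{Im}(z)|\leq\pi/\tau+L$ together with $|\arg z|\leq\theta\leq\theta_0<5\pi/8$ confines $w:=\tau z$ to a compact set $K$, bounded away from the origin and satisfying $|\mathrm{Im}(w)|\leq\pi+L\tau_0$ and $|\mathrm{Re}(w)|\lesssim 1$. On $K$ the function $\tau\delta_{\tau,2}(e^{-z\tau})=\tfrac{1}{2}(1-e^{-w})(3-e^{-w})$ is continuous and non-vanishing, and at any point of the positive real axis lying in $K$ both factors are positive reals, so its argument is $0$ there. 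By continuity and compactness the argument stays strictly below $\pi/2+\epsilon$ on all of $K$ provided $\theta_0-\pi/2$ and $\tau_0$ are small enough; the $A$-stability of BDF2 (which gives a strict margin on the closed right half-plane) is the headroom that permits opening the sector slightly past $\pi/2$ via a standard perturbation argument.

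\textbf{Main obstacle.} The delicate step is the moderate-$\tau|z|$ case: $|\mathrm{Im}(w)|$ may reach $\pi$, pushing $e^{-w}$ close to the negative real axis, where each of $1-e^{-w}$ and $3-e^{-w}$ contributes the largest argument. One must exploit the restriction $\theta_0<5\pi/8$ (which keeps $|e^{-w}|$ uniformly bounded on $K$, ruling out the degeneracy $e^{-w}\to\infty$) and combine it with a quantitative compactness/continuity argument on $K$ to prevent any accidental alignment that would drive the argument of the product past $\pi/2+\epsilon$. Producing this uniform bound over a multi-parameter compact region, with the correct interplay between $\theta_0$ and $\tau_0$, is the technical heart of the proof.
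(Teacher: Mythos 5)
Your decomposition into small and moderate $\tau|z|$, and the small-$\tau|z|$ expansion $\delta_{\tau,2}(e^{-z\tau})=z\bigl(1+\mathcal{O}((\tau z)^2)\bigr)$, are fine and consistent with the paper. The gap is in the moderate regime, and it is a real one. First, the set you call $K$ is not compact: the hypotheses only bound $|\mathrm{Im}(z)|$ and $|\arg z|$, so for $\arg z$ near $0$ the real part of $w=\tau z$ is unbounded (take $z$ real and large); the claim $|\mathrm{Re}(w)|\lesssim 1$ is simply not available from the lemma's assumptions. Second, and more seriously, the inference ``the argument is $0$ on the positive real axis, hence by continuity and compactness it stays below $\pi/2+\epsilon$ on $K$'' is not valid: $K$ is a two-dimensional region whose points reach $|\arg w|$ up to $\theta_0>\pi/2$ and $|\mathrm{Im}(w)|$ up to $\pi+L\tau$, and continuity at the real points says nothing about the values elsewhere in $K$. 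Third, the ``strict margin on the closed right half-plane'' you want from $A$-stability is false: $\tau\delta_{\tau,2}(e^{-\mathbf{i}\omega})=\mathbf{i}\omega+\mathcal{O}(\omega^3)$ as $\omega\to 0$, so the argument on the unit circle tends to $\pi/2$; there is no uniform gap that a soft perturbation argument could consume.

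The paper's proof supplies exactly the quantitative ingredients your sketch is missing. For $\arg z=\varphi\le\pi/2$ (equivalently $r=e^{-\tau|z|\cos\varphi}\le 1$) it computes $\cot\bigl(\arg\delta_{\tau,2}(e^{-z\tau})\bigr)$ explicitly and shows it is nonnegative, giving the sharp bound $\arg\le\pi/2$ with no margin required. For $\varphi\in(\pi/2,\theta]$ it uses a \emph{relative} perturbation estimate,
$|\delta_{\tau,2}(e^{-|z|\tau\cos(\varphi)}e^{-\mathbf{i}\omega})-\delta_{\tau,2}(e^{-\mathbf{i}\omega})|\le C\left|\varphi-\tfrac{\pi}{2}\right||\delta_{\tau,2}(e^{-\mathbf{i}\omega})|$,
which controls the change of argument by $C|\varphi-\pi/2|$ even where the modulus degenerates (this is where the lower bound $|\delta_{\tau,2}(e^{-\mathbf{i}\omega})|\ge C|z|$ enters), and then takes $\theta_0$ close to $\pi/2$. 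Finally, the range $\omega=\tau|z|\sin(\varphi)\in(\pi,\pi+L\tau]$, which your sketch does not isolate, is treated separately: there the argument dips slightly negative, into $[-c_0\tau,0)$. To repair your write-up you would need to replace the compactness/continuity step by these two quantitative estimates.
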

\begin{proof}
	Obviously, if $|{{z}}|\neq 0$ and $\arg({{z}})=0$, then we have $\arg \left(\delta_{\tau,2}(e^{-z\tau})\right)=0$.
	
	If $|{{z}}|\neq 0$, $\arg({{z}})=\varphi\in(0,\theta]$, and $0\leq {Im}(z)\leq \pi/\tau+L$, then $\omega=\tau|{{z}}|\sin(\varphi)\in(0,\pi+L\tau]$ and it's easy to see that
	\begin{itemize}
		\item[(1)] if $\omega\in(0,\pi]$, then $\arg \left(\delta_{\tau,2}(e^{-z\tau})\right) \in[0, \pi)$;
		\item[(2)] if $\omega\in(\pi,\pi+L\tau]$, then there exists a constant $c_0$ such that $\arg \left(\delta_{\tau,2}(e^{-z\tau})\right) \in[-c_0\tau, 0)$.
	\end{itemize}
	
	For (2), the conclusion can be directly obtained.
	
	For (1), if $\omega=\pi$, then $\arg(\delta_{\tau,2}(e^{-z\tau}))=0$ and \eqref{equlem00O2con} holds. Introduce $\varphi=\arg(z)$ and $r=e^{-\tau|z|\cos(\varphi)}$. When $\omega\in (0,\pi)$ and $\varphi\leq\frac{\pi}{2}$, i.e., $r\leq 1$, then we have
	\begin{equation*}
		\begin{aligned}
			\cot \left(\arg \left(\delta_{\tau,2}(e^{-z\tau})\right)\right) &=\frac{3/2-2r\cos(\omega)+r^2\cos(2\omega)/2}{2r\sin(\omega)-r^2\sin(2\omega)/2} \\
			&=\frac{3/2-2r\cos(\omega)+r^2(2\cos^{2}(\omega)-1)/2}{2r\sin(\omega)-r^2\sin(\omega)\cos(\omega)} \\
			&=\frac{\left(\frac{3}{2}-\frac{r^{2}}{2}\right)+\cos (\omega)\left(r^{2} \cos (\omega)-2 r\right)}{\sin (\omega)\left(2 r-r^{2} \cos (\omega)\right)} \\ &=\frac{\left(\frac{3}{2}-\frac{r^{2}}{2}\right)}{\sin (\omega)\left(2 r-r^{2} \cos (\omega)\right)}-\frac{\cos (\omega)}{\sin (\omega)}\\
			&\geq\frac{1}{\sin (\omega)\left(2 - \cos (\omega)\right)}-\frac{\cos (\omega)}{\sin (\omega)}\\
			&\geq \frac{1-(2-\cos(\omega))\cos(\omega)}{\sin (\omega)\left(2 - \cos (\omega)\right)}\geq 0.
		\end{aligned}
	\end{equation*}
	Thus $\arg \left(\delta_{\tau,2}(e^{-z\tau})\right)\leq \pi/2$.
	
	When $\varphi>\pi/2$, choosing $\varphi$ close to $\pi/2$, using Lemma \ref{lemmabetatauO2} and the definitions of $\delta_{\tau,2}$ and $\omega$, we have
	\begin{equation*}
		\tau|\delta_{\tau,2}^{\prime}(e^{-\sigma|z|\tau\cos(\varphi)}e^{-\mathbf{i}\omega})|\leq C \quad and\quad |\delta_{\tau,2}(e^{-\mathbf{i}\omega})|\geq C|z|,\quad\sigma\in[0,1],
	\end{equation*}
	where $\delta_{\tau,2}^{\prime}(z)$ means the first derivative about $z$.
	Thus we obtain
	\begin{equation*}
		\begin{aligned}
			|\delta_{\tau,2}(e^{-|z|\tau\cos(\varphi)}e^{-\mathbf{i}\omega})-\delta_{\tau,2}(e^{-\mathbf{i}\omega})|\leq& C\left |e^{-\sigma|z|\tau\cos(\varphi)}\right |\left|\delta_{\tau,2}^{\prime}(e^{-\sigma|z|\tau\cos(\varphi)}e^{-\mathbf{i}\omega})z\tau\cos(\varphi)\right| \\
			\leq& C|\cos(\varphi)||\delta_{\tau,2}(e^{-\mathbf{i}\omega})|\\
			\leq& C\left|\varphi-\frac{\pi}{2}\right||\delta_{\tau,2}(e^{-\mathbf{i}\omega})|.
		\end{aligned}
	\end{equation*}
	Using the fact \cite{Lubich1988} that when $|\zeta|\leq 1$ and $\zeta\neq 0$, $\delta_{\tau,2}(\zeta)\in \Sigma_{\pi/2}$ holds, we have $\delta_{\tau,2}(e^{-z\tau})$ lies in a sector $\Sigma_{\pi/2+\epsilon}$.

	So, we have proved \eqref{equlem00O2con} when $\arg(z)\in[0,\theta]$. The case $\arg(z)\in [-\theta,0)$ can be proved in the same way.
\end{proof}

\begin{theorem}\label{thmsemierrorO2}
	Let $G(x_0,\rho,t)$ and $G^n$  be the solutions of Eqs. \eqref{eqretosoleq} and \eqref{equsemischemeO2} respectively and assume $G_0\in L^2(\Omega)$. Then there exist
	\begin{equation*}
	\|G(x_0,\rho,t_n)-G^n\|_{L^2(\Omega)}\leq Ct_n^{-2}\tau^{2}\|G_0\|_{L^2(\Omega)}.
	\end{equation*}
\end{theorem}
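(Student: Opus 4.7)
The plan is to mirror the first-order argument of Theorem \ref{thmsemierror}, using the second-order splittings \eqref{equsolrep2} and \eqref{equsemisolrepO2} together with the sharper error estimate in Lemma \ref{lemmabetatauO2}(3). Since $\beta(z)=z+\rho U(x_0)$ and $e^{-t\rho U(x_0)}$ has Laplace transform $(\beta(z))^{-1}$, one has
\begin{equation*}
e^{-t_n\rho U(x_0)}G_0=\frac{1}{2\pi\mathbf{i}}\int_{\Gamma_{\theta,\kappa}}e^{zt_n}(\beta(z))^{-1}G_0\,dz,
\end{equation*}
so inverting the Laplace transform in \eqref{equsolrep2} gives
\begin{equation*}
G(x_0,\rho,t_n)-e^{-t_n\rho U(x_0)}G_0=-\frac{1}{2\pi\mathbf{i}}\int_{\Gamma_{\theta,\kappa}}e^{zt_n}((\beta(z))^{\alpha}+A)^{-1}A(\beta(z))^{-1}G_0\,dz.
\end{equation*}
Subtracting \eqref{equsemisolrepO2} and decomposing $\Gamma_{\theta,\kappa}=\Gamma^\tau_{\theta,\kappa}\cup(\Gamma_{\theta,\kappa}\setminus\Gamma^\tau_{\theta,\kappa})$, I would express the error as $\mathrm{I}+\mathrm{II}$, with $\mathrm{I}$ the integral of the exact integrand over the contour tail and $\mathrm{II}$ the integrand difference on $\Gamma^\tau_{\theta,\kappa}$.

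For $\mathrm{I}$, Lemma \ref{lemmaBeta} supplies $\|((\beta)^\alpha+A)^{-1}A\|\leq C$ and $|(\beta(z))^{-1}|\leq C|z|^{-1}$, so the integrand is $O(|z|^{-1})$; using $|z|^{-1}\leq C\tau$ on the tail together with $\int_{\pi/(\tau\sin\theta)}^{\infty}e^{rt_n\cos\theta}\,dr\leq Ct_n^{-1}e^{-ct_n/\tau}$ and the crude bound $e^{-ct_n/\tau}\leq C\tau/t_n$ for $t_n\geq\tau$ yields $\|\mathrm{I}\|\leq C\tau^2 t_n^{-2}\|G_0\|_{L^2(\Omega)}$. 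For $\mathrm{II}$, I would add and subtract to split the integrand difference into three pieces corresponding to the successive replacements $(\beta)^\alpha\to(\beta_{\tau,2})^\alpha$ inside the resolvent, $(\beta)^{-1}\to(\beta_{\tau,2})^{-1}$, and $1\to\mu(e^{-\tau\beta(z)})$. The first uses the resolvent identity
\begin{equation*}
((\beta_{\tau,2})^\alpha+A)^{-1}-((\beta)^\alpha+A)^{-1}=((\beta_{\tau,2})^\alpha+A)^{-1}\bigl[(\beta)^\alpha-(\beta_{\tau,2})^\alpha\bigr]((\beta)^\alpha+A)^{-1}
\end{equation*}
combined with Lemma \ref{lemmabetatauO2}(3) for $\gamma=\alpha$ and the bounds $\|((\beta_{\tau,2})^\alpha+A)^{-1}\|\leq C|z|^{-\alpha}$, $\|((\beta)^\alpha+A)^{-1}A\|\leq C$, $|(\beta)^{-1}|\leq C|z|^{-1}$, delivering a factor $C\tau^2|z|$. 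The second applies Lemma \ref{lemmabetatauO2}(3) with $\gamma=-1$ to get $|(\beta_{\tau,2})^{-1}-(\beta)^{-1}|\leq C\tau^2|z|$ and, together with $\|((\beta_{\tau,2})^\alpha+A)^{-1}A\|\leq C$, again contributes $C\tau^2|z|$. The third uses the Taylor expansion $\mu(e^{-w})-1=O(w^2)$ at $w=0$ combined with $\tau|\beta(z)|\leq C$ on $\Sigma^\tau_{\theta,\kappa}$ to produce $|\mu(e^{-\tau\beta})-1|\leq C\tau^2|z|^2$, and hence the same overall $C\tau^2|z|$ bound. Summing the three pieces and using $\int_{\Gamma^\tau_{\theta,\kappa}}|e^{zt_n}||z|\,|dz|\leq Ct_n^{-2}$ gives $\|\mathrm{II}\|\leq C\tau^2 t_n^{-2}\|G_0\|_{L^2(\Omega)}$, and combining with the bound on $\mathrm{I}$ finishes the proof.

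The hard part is the third piece: a naive bound $|e^{-\tau\beta(z)}-1|\leq C\tau|\beta(z)|$ yields only $O(\tau)$, so the full second-order rate relies on the double cancellation $\mu(1)=1$ and $\mu'(1)=0$ that is engineered into the choice of $\nu$ in \eqref{equdefd2n}; one should verify directly from $\mu(\zeta)=\tau\delta_{\tau,2}(\zeta)\nu(\zeta)=\zeta(3-\zeta)^2/4$ that this polynomial indeed vanishes to second order at $\zeta=1$. A secondary subtlety is handling the regime $\tau|\beta(z)|\gtrsim 1$, where the Taylor estimate must be replaced by combining $|\mu|\leq C$ with $\tau|z|\leq C$ to recover the same $\tau^2|z|^2$ control, analogous to the case split already used in the proof of Lemma \ref{lemmabetatauO2}(3).
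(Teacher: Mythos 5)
Your proposal is correct and follows essentially the same route as the paper: subtract \eqref{equsemisolrepO2} from the inverse Laplace transform of \eqref{equsolrep2}, bound the contour tail using the decay of $|z|^{-1}$ and $e^{zt_n}$, and split the integrand difference on $\Gamma^{\tau}_{\theta,\kappa}$ into the same three pieces controlled by the resolvent identity, Lemma \ref{lemmabetatauO2}(3), and $\mu(e^{-z\tau})=1+O(z^{2}\tau^{2})$. Your explicit verification that $\mu(1)=1$ and $\mu'(1)=0$, and your treatment of the regime $\tau|\beta(z)|\gtrsim 1$, simply make explicit what the paper delegates to the cited reference.
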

\begin{proof}
	Subtracting \eqref{equsemisolrepO2} from the inverse Laplace transform of \eqref{equsolrep2} leads to
	\begin{equation*}
	\begin{aligned}
	&\|G(x_0,\rho,t_n)-G^n\|_{L^2(\Omega)}\\
	\leq& C\left \|\frac{1}{2\pi\mathbf{i}}\int_{\Gamma_{\theta,\kappa}}e^{zt_n}((\beta(z))^{\alpha}+A)^{-1}A(\beta(z))^{-1}G_0dz\right .\\
	&\qquad\qquad\left .-\frac{1}{2\pi\mathbf{i}}\int_{\Gamma_{\theta,\kappa}^\tau}e^{zt_n}((\beta_{\tau,2}(z))^{\alpha}+A)^{-1}A(\beta_{\tau,2}(z))^{-1}\mu(e^{-\tau\beta(z)})G^0dz\right \|_{L^2(\Omega)}\\
	\leq& C\left\|\int_{\Gamma_{\theta,\kappa}\backslash\Gamma_{\theta,\kappa}^{\tau}}e^{zt_n}((\beta(z))^{\alpha}+A)^{-1}A(\beta(z))^{-1}dz\right \|\|G_{0}\|_{L^2(\Omega)}\\
	&+C\Big \|\int_{\Gamma_{\theta,\kappa}^{\tau}}e^{zt_n}\big (((\beta(z))^{\alpha}+A)^{-1}A(\beta(z))^{-1}  \\ &
-((\beta_{\tau,2}(z))^{\alpha}+A)^{-1}A(\beta_{\tau,2}(z))^{-1}\mu(e^{-\tau\beta(z)})\big)dz\Big \|\|G_{0}\|_{L^2(\Omega)}\\
	\leq&C(\uppercase\expandafter{\romannumeral1}+\uppercase\expandafter{\romannumeral2})\|G_{0}\|_{L^2(\Omega)}.
	\end{aligned}
	\end{equation*}
	For $\uppercase\expandafter{\romannumeral1}$, using Lemma \ref{lemmaBeta}, it has
	\begin{equation*}
	\begin{aligned}
	\uppercase\expandafter{\romannumeral1}\leq&\int_{\Gamma_{\theta,\kappa}\backslash\Gamma_{\theta,\kappa}^{\tau}}|e^{zt_n}||z|^{-1}|dz|
	\leq C\tau^2\int_{\frac{\pi}{\tau\sin(\theta)}}^{\infty}e^{t_nr\cos(\theta)}rdr
	\leq Ct_n^{-2}\tau^2.
	\end{aligned}
	\end{equation*}
	As for $\uppercase\expandafter{\romannumeral2}$, it can be split as
	\begin{equation*}
	\begin{aligned}
	\uppercase\expandafter{\romannumeral2}\leq&\left \|\int_{\Gamma_{\theta,\kappa}^{\tau}}e^{zt_n}\left (((\beta(z))^{\alpha}+A)^{-1}A(\beta(z))^{-1}-((\beta_{\tau,2}(z))^{\alpha}+A)^{-1}A(\beta(z))^{-1}\right)dz\right \|\\
	&+\left \|\int_{\Gamma_{\theta,\kappa}^{\tau}}e^{zt_n}\left (((\beta_{\tau,2}(z))^{\alpha}+A)^{-1}A(\beta(z))^{-1}-((\beta_{\tau,2}(z))^{\alpha}+A)^{-1}A(\beta_{\tau,2}(z))^{-1}\right)dz\right \|\\
	&+\left \|\int_{\Gamma_{\theta,\kappa}^{\tau}}e^{zt_n}((\beta_{\tau,2}(z))^{\alpha}+A)^{-1}A(\beta_{\tau,2}(z))^{-1}\left (1-\mu\left (e^{-\tau\beta_{\tau,2}(z)}\right )\right)dz\right \|.
	\end{aligned}
	\end{equation*}
	Then by Lemmas \ref{lemmaBeta}, \ref{lemmabetatauO2} and the facts
	\begin{equation*}
	\begin{aligned}
	&\left \|((\beta(z))^{\alpha}+A)^{-1}A(\beta(z))^{-1}-((\beta_{\tau,2}(z))^{\alpha}+A)^{-1}A(\beta(z))^{-1}\right\|\\
	= &\left \|((\beta(z))^{\alpha}+A)^{-1}((\beta_{\tau,2}(z))^{\alpha}-(\beta(z))^{\alpha})((\beta_{\tau,2}(z))^{\alpha}+A)^{-1}A(\beta(z))^{-1}\right\|
	\leq C\tau^2|z|
	\end{aligned}
	\end{equation*}
	and $\mu(e^{-z\tau})=1+O(z^2\tau^2)$ \cite{Lubich1996}, we have
	\begin{equation*}
	\begin{aligned}
	\uppercase\expandafter{\romannumeral2}\leq & C\tau^2 \int_{\Gamma^\tau_{\theta,\kappa}}|e^{zt_n}||z||dz|\leq C\tau^2 t_n^{-2}.
	\end{aligned}
	\end{equation*}
	Thus
	\begin{equation*}
	\|G(x_0,\rho,t_n)-G^n\|_{L^2(\Omega)}\leq Ct_n^{-2}\tau^{2}\|G_0\|_{L^2(\Omega)}.
	\end{equation*}
\end{proof}

\section{Space discretization and error analysis}
In this section, we discretize Laplacian by the finite element method and provide the error estimate for the fully discrete scheme of Eq. \eqref{eqretosoleq}. Here we construct the fully discrete scheme based on backward Euler scheme \eqref{equsemischeme}; the corresponding one for \eqref{equsemischemeO2} will be commented at the end of this section. Let $\mathcal{T}_h$ be a shape regular quasi-uniform partitions of the domain $\Omega$, where $h$ is the maximum diameter. Denote $ X_h $ as piecewise linear finite element space
\begin{equation*}
X_{h}=\{v_h\in C(\bar{\Omega}): v_h|_\mathbf{T}\in \mathcal{P}^1,\  \forall \mathbf{T}\in\mathcal{T}_h,\ v_h|_{\partial \Omega}=0\},
\end{equation*}
where $\mathcal{P}^1$ denotes the set of piecewise polynomials of degree $1$ over $\mathcal{T}_h$. Then we define the $ L^2 $-orthogonal projection $ P_h:\ L^2(\Omega)\rightarrow X_h $ by
\begin{equation*}
\begin{aligned}
&(P_hu,v_h)=(u,v_h) ~~~~\forall v_h\in X_h.\\
\end{aligned}
\end{equation*}
Denote $(\cdot,\cdot)$ as the $L_2$ inner product. Then the fully discrete scheme for Eq. \eqref{eqretosoleq} reads: Find $ G^n_{h}\in X_h$ such that
\begin{equation}\label{equsfullscheme}
\left (\sum_{i=0}^{n-1}d^{\alpha,1}_{i}e^{-t_{i}\rho U(x_0)}G^{n-i}_h,v_{h}\right )+(\nabla G^n_{h}, \nabla v_{h})=\left (e^{-t_{n}\rho U(x_0)}\sum_{i=0}^{n-1}d^{\alpha,1}_{i}G^0,v_h\right )
\end{equation}
for any $v_{h}\in X_h$. For convenience, denote	$f(x_0,\rho,t)=e^{-t\rho U(x_0)}G^0$, $f(t)=f(x_0,\rho,t)$, $f^n=f(t_n)$, and $f^n_h=P_hf^n$ in the following.  Thus \eqref{equsfullscheme} can be rewritten as
\begin{equation}\label{equsfullscheme1}
	\left (\sum_{i=0}^{n-1}d^{\alpha,1}_{i}e^{-t_{i}\rho U(x_0)}G^{n-i}_h,v_{h}\right )+(\nabla G^n_{h}, \nabla v_{h})=\left (\sum_{i=0}^{n-1}d^{\alpha,1}_{i}f^{n}_h,v_h\right )
\end{equation}
for any $v_{h}\in X_h$.

\begin{remark}
	Using the time discretization introduced in Sec. 3, the time semi-discrete scheme of Eq. \eqref{eqretosol} can be written as
	\begin{equation*}
	\left\{
	\begin{aligned}
	&\frac{G^{n}-G^{n-1}}{\tau}+\sum_{i=0}^{n-1}d^{\alpha,1}_{i}e^{-t_{i}\rho U(x_0)}A G^{n-i}+\rho{U}(x_0)G^{n}=0,\quad x_0\in \Omega,\quad n=1,\ldots,N,\\	
	&G^0=G_0,\qquad\qquad\qquad\qquad\qquad\qquad\qquad\qquad\qquad\quad\,\,\,\,  x_0\in \Omega,\\
	&G^n(x_0)=0,\qquad\qquad\quad\qquad\qquad\qquad\qquad\qquad\qquad\quad\,\,\,\, x_0\in \partial \Omega,\quad n=1,\ldots,N,
	\end{aligned}
	\right.
	\end{equation*}
	and  the fully discrete scheme has the form
	\begin{equation}\label{equfullschemeneq1}
		\left(\frac{G^{n}_{h}-G^{n-1}_{h}}{\tau},v_h\right)-\sum_{i=0}^{n-1}d^{\alpha,1}_{i}\left (e^{-t_{i}\rho U(x_0)}\Delta G^{n-i}_h,v_{h}\right )+( \rho U(x_0)G^n_{h}, v_{h})=0, ~~~~\forall v_h\in X_h,
	\end{equation}
	or
	\begin{equation}\label{equfullschemeneq2}
	\left(\frac{G^{n}_{h}-G^{n-1}_{h}}{\tau},v_h\right)+ \sum_{i=0}^{n-1}d^{\alpha,1}_{i}\left(\nabla G^{n-i}_h,\nabla e^{-t_{i}\rho U(x_0)}v_{h}\right )+( \rho U(x_0)G^n_{h}, v_{h})=0, ~~~~\forall v_h\in X_h.
	\end{equation}
	It is easy to see that the second term $\left (\sum_{i=0}^{n-1}d^{\alpha,1}_{i}e^{-t_{i}\rho U(x_0)}\Delta G^{n-i}_h,v_{h}\right )$ in \eqref{equfullschemeneq1}  vanishes since $G^i_h\in X_h$. As for \eqref{equfullschemeneq2}, we need to require $U(x_0)$ regular enough to guarantee
	\begin{equation*}
		\left (e^{-t_{i}\rho U(x_0)}\Delta G^{n-i}_h,v_{h}\right )=\left(\nabla G^{n-i}_h,\nabla e^{-t_{i}\rho U(x_0)}v_{h}\right ).
	\end{equation*}
	Thus the equivalent form \eqref{eqretosoleq} can help to construct numerical scheme efficiently and reduce the requirement of the regularity of $U(x_0)$.
\end{remark}

Define the discrete operator $A_{h}$: $X_h\rightarrow X_h$ satisfying
\begin{equation*}
(A_{h} u_h,v_h)=(\nabla u_h,\nabla v_h) \quad\forall u_h,\ v_h\in X_h.
\end{equation*}
Then  \eqref{equsfullscheme}  can be rewritten as
\begin{equation}\label{equfullAhscheme} 	
\sum_{i=0}^{n-1}d^{\alpha,1}_{i}e^{-t_{i}\rho U(x_0)}G^{n-i}_h+A_h G^n_{h}=\sum_{i=0}^{n-1}d^{\alpha,1}_{i}f^n_h.
\end{equation}
Multiplying $\zeta^n$ and summing $n$ from $1$ to $\infty$ for Eq. \eqref{equfullAhscheme} lead to
\begin{equation*}
\begin{aligned}
\sum_{n=1}^{\infty}\sum_{i=0}^{n-1}d^{\alpha,1}_{i}e^{-t_{i}\rho U(x_0)}G^{n-i}_h\zeta^n+\sum_{n=1}^{\infty}A_h G^n_{h}\zeta^n=\sum_{n=1}^{\infty}\sum_{i=0}^{n-1}d^{\alpha,1}_{i}f^n_h\zeta^n.
\end{aligned}
\end{equation*}
Taking $\zeta=e^{-z\tau}$ and using Cauchy's integral theorem lead to
\begin{equation}\label{equfullsolrepspac}
	G^n_h=\frac{\tau}{2\pi \mathbf{i}}\int_{\Gamma^\tau_{\theta,\kappa}}e^{zt_n}((\beta_{\tau,1}(z))^{\alpha}+A_h)^{-1}\sum_{n=1}^{\infty}\sum_{i=0}^{n-1}d^{\alpha,1}_{i}f^n_he^{-zt_n}dz.
\end{equation}
Similarly, the solution $G^n$ of semi-discrete scheme \eqref{equsemischeme} can also be written as
\begin{equation}\label{equsemisolrepspac}
	G^n=\frac{\tau}{2\pi \mathbf{i}}\int_{\Gamma^\tau_{\theta,\kappa}}e^{zt_n}((\beta_{\tau,1}(z))^{\alpha}+A)^{-1}\sum_{n=1}^{\infty}\sum_{i=0}^{n-1}d^{\alpha,1}_{i}f^ne^{-zt_n}dz.
\end{equation}

\begin{remark}\label{remfullscheme}
	According to \cite{Bazhlekova2015,Jin2016}, the convergence in space can be obtained by estimating
	\begin{equation*}
		\begin{aligned}
			&\|G^n-G^n_h\|_{L^2(\Omega)}\\
			\leq&C\left \|\int_{\Gamma^\tau_{\theta,\kappa}}e^{zt_n}((\beta_{\tau,1}(z))^\alpha+A)^{-1}(\beta_{\tau,1}(z))^{\alpha-1}G_0dz\right .\\
			&\qquad\left .-\int_{\Gamma^{\tau}_{\theta,\kappa}}e^{zt_n}((\beta_{\tau,1}(z))^\alpha+A_h)^{-1}(\beta_{\tau,1}(z))^{\alpha-1}P_hG_0dz\right \|_{L^2(\Omega)}\\
			\leq&C\int_{\Gamma^\tau_{\theta,\kappa}}e^{zt_n}\Big \|((\beta_{\tau,1}(z))^\alpha+A)^{-1}(\beta_{\tau,1}(z))^{\alpha-1}
\\
			&\qquad -((\beta_{\tau,1}(z))^\alpha+A_h)^{-1}(\beta_{\tau,1}(z))^{\alpha-1}P_h\Big \||dz|\|G_0\|_{L^2(\Omega)},
		\end{aligned}
	\end{equation*}
	where the representation of $G^n_h$ can be got by modifying the fully discrete scheme \eqref{equsfullscheme} as
	\begin{equation}\label{equsfullschemew}
	\left (\sum_{i=0}^{n-1}d^{\alpha,1}_{i}e^{-t_{i}\rho U(x_0)}G^{n-i}_h,v_{h}\right )+(\nabla G^n_{h}, \nabla v_{h})=\left (e^{-t_{n}\rho U(x_0)}\sum_{i=0}^{n-1}d^{\alpha,1}_{i}P_hG^0,v_h\right ).
	\end{equation}
	Since we need to estimate $\|((\beta_{\tau,1}(z))^\alpha+A)^{-1}(\beta_{\tau,1}(z))^{\alpha-1}-((\beta_{\tau,1}(z))^\alpha+A_h)^{-1}(\beta_{\tau,1}(z))^{\alpha-1}P_h\|$, the following equation is needed (refer to the proof of Lemma \ref{lemeroper})
	\begin{equation}\label{equremark}
		((\beta_{\tau,1}(z))^{\alpha}G^0,v_{h})=((\beta_{\tau,1}(z))^{\alpha}P_hG^0,v_{h}) \quad\forall v_{h}\in X_h.
	\end{equation}
	Obviously Eq. \eqref{equremark} holds only when $U(x_0)$ is regular enough and the grid mesh is suitable.
\end{remark}
But for the scheme \eqref{equsfullscheme}, the equality \eqref{equremark} is no longer necessary. Next we introduce two lemmas, which will be used in the error estimate between Eqs. \eqref{equsemischeme} and   \eqref{equsfullscheme}.

\begin{lemma}[\cite{Bazhlekova2015}]\label{lemestofapb}
 For any $\gamma_1, ~\gamma_2>0$ with $\theta\in(\pi/2,\pi)$, there exists
	\begin{equation*}
	\gamma_1|z|+\gamma_2\leq \frac{|\gamma_1 z+\gamma_2|}{\sin(\theta/2)} \ for\ z\in \Sigma_{\pi-\theta}.
	\end{equation*}
\end{lemma}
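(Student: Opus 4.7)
The plan is to square both sides and reduce the claim to an elementary one-variable trigonometric inequality. Parametrize $z = r e^{\mathbf{i}\varphi}$ with $r = |z|$ and $|\varphi| \le \pi - \theta$. Expanding real and imaginary parts gives
\[
|\gamma_1 z + \gamma_2|^2 = \gamma_1^2 r^2 + 2\gamma_1 \gamma_2 r \cos\varphi + \gamma_2^2,
\]
while $(\gamma_1 r + \gamma_2)^2 = \gamma_1^2 r^2 + 2 \gamma_1 \gamma_2 r + \gamma_2^2$. So the claim is equivalent to showing
\[
\sin^2(\theta/2)\,(\gamma_1 r + \gamma_2)^2 \le \gamma_1^2 r^2 + 2\gamma_1 \gamma_2 r \cos\varphi + \gamma_2^2
\]
for every admissible $\varphi$.

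Next I would identify the worst case in $\varphi$. Since $\cos\varphi$ is monotone on $[0,\pi]$ and $|\varphi| \le \pi - \theta < \pi/2$, its minimum over the admissible range is attained at $|\varphi| = \pi - \theta$, giving $\cos\varphi = -\cos\theta$. Substituting this worst case and applying the half-angle identity $1 - \cos\theta = 2\sin^2(\theta/2)$, the right-hand side rewrites as
\[
\gamma_1^2 r^2 - 2\gamma_1 \gamma_2 r \cos\theta + \gamma_2^2 = (\gamma_1 r - \gamma_2)^2 + 4\gamma_1 \gamma_2 r \sin^2(\theta/2),
\]
whereas on the left one has $\sin^2(\theta/2)(\gamma_1 r + \gamma_2)^2 = \sin^2(\theta/2)(\gamma_1 r - \gamma_2)^2 + 4\gamma_1 \gamma_2 r \sin^2(\theta/2)$.

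Cancelling the common summand $4\gamma_1 \gamma_2 r \sin^2(\theta/2)$, the inequality collapses to $\sin^2(\theta/2)(\gamma_1 r - \gamma_2)^2 \le (\gamma_1 r - \gamma_2)^2$, which is immediate from $\sin^2(\theta/2) \le 1$ for $\theta \in (\pi/2,\pi)$. There is no real obstacle here: the argument is just careful algebra, and the only point needing explicit justification is that the worst case over $\varphi$ indeed occurs on the boundary of the sector, which follows from the monotonicity of $\cos$ on the relevant interval.
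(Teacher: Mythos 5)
Your argument is correct and complete: the reduction to the squared inequality, the identification of the worst case $\cos\varphi=-\cos\theta$ at $|\varphi|=\pi-\theta$ (valid since $\pi-\theta<\pi/2$ keeps $\cos$ monotone there), and the half-angle/`$(a+b)^2=(a-b)^2+4ab$' bookkeeping all check out. The paper itself offers no proof of this lemma, only the citation to Bazhlekova et al., so there is nothing to compare against; your elementary computation is exactly the standard way this estimate is established in that reference.
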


\begin{lemma}\label{lemeroper}
	Let $v\in L^2(\Omega)$ and $z\in\Sigma_{\theta,\kappa}$. Denote $w=((\beta_{\tau,1}(z))^{\alpha}+A)^{-1}v$ and $w_h=((\beta_{\tau,1}(z))^{\alpha}+A_h)^{-1} P_hv$. There exists
	\begin{equation*}
	\|w-w_h\|_{L^2(\Omega)}+h\|w-w_h\|_{\dot{H}^1(\Omega)}\leq Ch^{2}\|v\|_{L^2(\Omega)}.
	\end{equation*}
\end{lemma}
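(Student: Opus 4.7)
The plan is to adapt the standard Ritz-projection finite-element error analysis to the present resolvent problem, tracking the sectorial multiplicative coefficient $(\beta_{\tau,1}(z))^\alpha$ throughout. First I would subtract the weak forms satisfied by $w$ and $w_h$ (using $(P_hv,v_h)=(v,v_h)$ for $v_h\in X_h$) to obtain the Galerkin orthogonality
\begin{equation*}
((\beta_{\tau,1}(z))^\alpha (w-w_h),v_h)+(\nabla (w-w_h),\nabla v_h)=0\qquad\forall\,v_h\in X_h.
\end{equation*}
This reduces the problem to a consistency estimate for the sesquilinear form $a(u,\phi):=((\beta_{\tau,1}(z))^\alpha u,\phi)+(\nabla u,\nabla\phi)$, which is only sectorially coercive because of the complex multiplicative weight.

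Next I would introduce the Ritz projection $R_h:\dot{H}^1(\Omega)\to X_h$ defined by $(\nabla R_hu,\nabla v_h)=(\nabla u,\nabla v_h)$, for which the classical bound $\|u-R_hu\|_{L^2(\Omega)}+h\|u-R_hu\|_{\dot{H}^1(\Omega)}\leq Ch^2\|Au\|_{L^2(\Omega)}$ is available. Writing $w-w_h=\eta+\theta$ with $\eta:=w-R_hw$ and $\theta:=R_hw-w_h\in X_h$, Lemma \ref{lemmabetatau} supplies $\|Aw\|_{L^2(\Omega)}\leq C\|v\|_{L^2(\Omega)}$, so the $\eta$ part is immediately controlled by $Ch^2\|v\|_{L^2(\Omega)}$ in $L^2$ and $Ch\|v\|_{L^2(\Omega)}$ in $\dot{H}^1$. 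For $\theta$, Galerkin orthogonality combined with $(\nabla\eta,\nabla v_h)=0$ yields
\begin{equation*}
((\beta_{\tau,1}(z))^\alpha \theta,v_h)+(\nabla\theta,\nabla v_h)=-((\beta_{\tau,1}(z))^\alpha \eta,v_h)\qquad\forall\,v_h\in X_h.
\end{equation*}
Testing with $v_h=\bar\theta$ and invoking Lemma \ref{lemestofapb} together with the sectorial information on $\beta_{\tau,1}(z)$ from Lemma \ref{lemmabetatau}, I expect to extract a lower bound of the form $|a(\theta,\bar\theta)|\geq C(\|(\beta_{\tau,1}(z))^\alpha\|_{L^\infty(\Omega)}\|\theta\|_{L^2(\Omega)}^2+\|\nabla\theta\|_{L^2(\Omega)}^2)$. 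Bounding the right-hand side by Cauchy--Schwarz and applying Young's inequality to absorb the $\|\theta\|^2$-term then gives $\|\theta\|_{L^2(\Omega)}\leq C\|\eta\|_{L^2(\Omega)}\leq Ch^2\|v\|_{L^2(\Omega)}$, while the inverse inequality $\|\nabla\theta\|_{L^2(\Omega)}\leq Ch^{-1}\|\theta\|_{L^2(\Omega)}$ on $X_h$ promotes this to $\|\nabla\theta\|_{L^2(\Omega)}\leq Ch\|v\|_{L^2(\Omega)}$. Combining the bounds on $\eta$ and $\theta$ via $w-w_h=\eta+\theta$ then delivers both halves of the claim.

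The step I expect to require the most care is the sectorial coercivity estimate for $a(\theta,\bar\theta)$: because $(\beta_{\tau,1}(z))^\alpha$ is a multiplication operator whose pointwise argument may be as large as $3\pi\alpha/4$, confirming that Lemma \ref{lemestofapb} yields a lower bound of the required form with a constant independent of both $z$ and $x_0$ will depend on a careful choice of the sector parameter $\theta$ from Lemma \ref{lemmabetatau} and on the precise way in which the $L^\infty$ bound $\|(\beta_{\tau,1}(z))^\alpha\|_{L^\infty(\Omega)}\geq C|z|^\alpha>0$ is exploited when absorbing. Once this coercivity is established, everything else is routine and yields $\|w-w_h\|_{L^2(\Omega)}+h\|w-w_h\|_{\dot{H}^1(\Omega)}\leq Ch^2\|v\|_{L^2(\Omega)}$.
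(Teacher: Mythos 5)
Your argument is correct, but it takes a genuinely different route from the paper's. The paper works directly with $e=w-w_h$: it proves the sectorial quasi-coercivity bound \eqref{equalityww} (Appendix A together with Lemma \ref{lemestofapb}), inserts the Lagrange interpolant $\pi_h w$ as comparison function to obtain the C\'ea-type $H^1$ estimate \eqref{eqlemH1}, and then runs a bespoke Aubin--Nitsche duality argument with $\psi=((\beta_{\tau,1}(z))^{\alpha}+A)^{-1}\phi$ to upgrade to $O(h^2)$ in $L^2$. You instead split $w-w_h=\eta+\theta$ through the Ritz projection, so the $O(h^2)$ rate is inherited from the classical Ritz-projection $L^2$ estimate combined with $\|Aw\|_{L^2(\Omega)}\le C\|v\|_{L^2(\Omega)}$ (Lemma \ref{lemmabetatau}), while the discrete remainder $\theta$ is controlled by sectorial coercivity, Cauchy--Schwarz, and the inverse inequality. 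The delicate point you flag --- a lower bound of the form $\bigl|((\beta_{\tau,1}(z))^{\alpha}\theta,\theta)+(\nabla\theta,\nabla\theta)\bigr|\ge c\bigl(|z|^{\alpha}\|\theta\|_{L^2(\Omega)}^2+\|\nabla\theta\|_{L^2(\Omega)}^2\bigr)$ with constants independent of $z$ and $x_0$ --- is exactly what the paper establishes in \eqref{equalityww} and Appendix A: the angular spread of $(\beta_{\tau,1}(z,x))^{\alpha}$ over $x\in\Omega$ is small because $|z|\ge\kappa$ dominates $|\rho|\|U(x_0)\|_{L^{\infty}(\bar\Omega)}$, and the two extreme moduli of $\beta_{\tau,1}(z,\cdot)$ are comparable to $|z|$ by Lemma \ref{lemmabetatau}, after which Lemma \ref{lemestofapb} combines the two terms; so that ingredient is available to you unchanged. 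What your route buys is the avoidance of a second, problem-specific duality argument for the resolvent $((\beta_{\tau,1}(z))^{\alpha}+A)^{-1}$; what it costs is reliance on the inverse inequality, hence on quasi-uniformity of the mesh (which the paper assumes anyway), and on the elliptic-regularity-based Ritz $L^2$ estimate, which is essentially the same duality hidden in a standard package. Both proofs are sound and yield the stated bound.
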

\begin{proof}
	Using the definitions of $w$, $w_h$, $A$ and $A_h$, we have
	\begin{equation*}
	\begin{aligned}
	&((\beta_{\tau,1}(z))^{\alpha} w,\chi)+(\nabla w,\nabla \chi)=(v,\chi)\quad \forall \chi\in H^1_0(\Omega),\\
	&((\beta_{\tau,1}(z))^{\alpha} w_h,\chi)+(\nabla  w_h,\nabla \chi)=(P_hv,\chi)\quad \forall \chi\in X_h.\\
	\end{aligned}
	\end{equation*}
	Denote $e=w-w_h$. Thus
	\begin{equation}\label{equimprelation}
	((\beta_{\tau,1}(z))^{\alpha} e,\chi)+( \nabla e,\nabla \chi)=0 \quad \forall \chi\in X_h.
	\end{equation}
	Choose $\beta_{\tau,max}=\beta_{\tau,1}(z,x)$ for some $x\in\Omega$ satisfying $|\beta_{\tau,max}|=\sup_{x\in\Omega}|\beta_{\tau,1}(z,x)|$ and $\beta_{\tau,min}=\beta_{\tau,1}(z,x)$ for some $x\in\Omega$ satisfying $|\beta_{\tau,min}|=\inf_{x\in\Omega}|\beta_{\tau,1}(z,x)|$. According to Lemmas \ref{lemmabetatau} and \ref{lemestofapb}, there exists
	\begin{equation}\label{equalityww}
	\begin{aligned}
	|\beta_{\tau,max}^{\alpha}|\|e\|^2_{L^2(\Omega)}+\|\nabla e\|^2_{L^2(\Omega)}\leq& C|\beta_{\tau,min}^{\alpha}|\|e\|^2_{L^2(\Omega)}+\|\nabla e\|^2_{L^2(\Omega)}\\
	\leq& C\left |((\beta_{\tau,1}(z))^{\alpha} e,e)\right |+\|\nabla e\|^2_{L^2(\Omega)}\\
	\leq& C|((\beta_{\tau,1}(z))^{\alpha} e,e)+(\nabla e,\nabla e)|\\
	\leq& C|((\beta_{\tau,1}(z))^{\alpha} e,w-\chi)+(\nabla e,\nabla (w-\chi))|,
	\end{aligned}
	\end{equation}
	the detailed derivations of which can be seen in Appendix A. Taking $\chi=\pi_h w$ as the Lagrange interpolation of $w$ and using the Cauchy-Schwarz inequality, we have
	\begin{equation*}
	|\beta_{\tau,max}^{\alpha}|\|e\|^2_{L^2(\Omega)}+\|\nabla e\|^2_{L^2(\Omega)}\leq C\left ( |\beta_{\tau,max}^{\alpha}|h\|e\|_{L^2(\Omega)}\|\nabla w\|_{L^2(\Omega)}+h\|\nabla e\|_{L^2(\Omega)}\|w\|_{\dot{H}^{2}(\Omega)}\right ).
	\end{equation*}
	Using Lemma \ref{lemestofapb} again, it has
	\begin{equation*}
	\begin{aligned}
	|\beta_{\tau,min}^{\alpha}|\|w\|^2_{L^2(\Omega)}+\|\nabla w\|^2_{L^2(\Omega)}\leq& C|(((\beta_{\tau,1}(z))^{\alpha}+A)w,w)|
	\leq C\|v\|_{L^2(\Omega)}\|w\|_{L^2(\Omega)},
	\end{aligned}
	\end{equation*}
	which leads to
	\begin{equation*}
	\|w\|_{L^2(\Omega)}\leq C|z|^{-\alpha}\|v\|_{L^2(\Omega)},\quad \|\nabla w\|_{L^2(\Omega)}\leq C|z|^{-\alpha/2}\|v\|_{L^2(\Omega)}.
	\end{equation*}
	On the other hand, we can obtain
	\begin{equation*}
	\begin{aligned}
	\|w\|_{\dot{H}^{2}(\Omega)}=&\|Aw\|_{L^2(\Omega)}\leq C\|(-(\beta_{\tau,1}(z))^{\alpha}+(\beta_{\tau,1}(z))^{\alpha}+A)((\beta_{\tau,1}(z))^{\alpha}+A)^{-1}v\|_{L^{2}(\Omega)}\\
	\leq&C(|\|v\|_{L^2(\Omega)}+|z|^{\alpha}\|w\|_{L^2(\Omega)})
	\leq C\|v\|_{L^2(\Omega)}.
	\end{aligned}
	\end{equation*}
	Thus we have
	\begin{equation*}
	\begin{aligned}
	&|\beta_{\tau,max}^{\alpha}|\|e\|^2_{L^2(\Omega)}+\|\nabla e\|^2_{L^2(\Omega)}\\
	\leq& Ch\|v\|_{L^2(\Omega)}\left(|z|^{\alpha/2}\|e\|_{L^2(\Omega)}+\|\nabla e\|_{L^2(\Omega)}\right),
	\end{aligned}
	\end{equation*}
	which leads to
	\begin{equation}\label{eqlemH1}
	|\beta_{\tau,max}|^{\alpha/2}\|e\|_{L^2(\Omega)}+\|\nabla e\|_{L^2(\Omega)}\leq Ch\|v\|_{L^2(\Omega)}.
	\end{equation}
		To get the $L^2$ estimate, for $\phi\in L^2(\Omega)$ we set
	\begin{equation*}
	\psi=((\beta_{\tau,1}(z))^{\alpha}+A)^{-1}\phi.
	\end{equation*}
		Using Lemma \ref{lemmabetatau}, we have
	\begin{equation}\label{equpsi1}
	\|\psi\|_{L^2(\Omega)}\leq C|z|^{-\alpha}\|\phi\|_{L^2(\Omega)},\quad \|A\psi\|_{L^2(\Omega)}\leq C\|\phi\|_{L^2(\Omega)}.
	\end{equation}
	Interpolation property leads to
	\begin{equation}\label{equpsi2}
		\|\psi\|_{\dot{H}^{1}(\Omega)}\leq C|z|^{-\alpha/2}\|\phi\|_{L^2(\Omega)}.
	\end{equation}
	By duality, there is
	\begin{equation*}
	\|e\|_{L^2(\Omega)}\leq \sup_{\phi\in L^2(\Omega)}\frac{|(e,\phi)|}{\|\phi\|_{L^2(\Omega)}}\leq \sup_{\phi\in L^2(\Omega)}\frac{|((\beta_{\tau,1}(z))^{\alpha}e,\psi)+(\nabla e,\nabla \psi)|}{\|\phi\|_{L^2(\Omega)}}.
	\end{equation*}
Furthermore,
	\begin{equation*}
	\begin{aligned}
	|((\beta_{\tau,1}(z))^{\alpha}e,\psi)+(\nabla e,\nabla \psi)|= & |((\beta_{\tau,1}(z))^{\alpha} e,\psi - P_{h}\psi) + (\nabla e,\nabla (\psi - P_{h}\psi ))|\\
	\leq&|z|^{\alpha/2}\|e\|_{L^2(\Omega)}|z|^{\alpha/2}\|\psi -P_{h}\psi\|_{L^2(\Omega)}\\
	&+\|\nabla e\|_{L^2(\Omega)}\|\nabla (\psi - P_{h}\psi )\|_{L^2(\Omega)}\\
	\leq& Ch^{2} \|v\|_{L^2(\Omega)}\|\phi\|_{L^2(\Omega)},
	\end{aligned}
	\end{equation*}
	which follows by Eqs. \eqref{eqlemH1}, \eqref{equpsi1}, and \eqref{equpsi2}. Thus,
	the desired estimate is obtained.
\end{proof}

\begin{theorem}\label{thmfullerror}
		Let $G^n$ and $G^n_h$  be the solutions of Eqs. \eqref{equsemischeme} and \eqref{equsfullscheme} respectively and assume $G_0\in L^2(\Omega)$. Then we obtain
	\begin{equation*}
	\|G^n-G^n_h\|_{L^2(\Omega)}+h\|\nabla(G^n-G^n_h)\|_{L^2(\Omega)}\leq Ch^2t_n^{-\alpha}\|G_0\|_{L^2(\Omega)}.
	\end{equation*}
\end{theorem}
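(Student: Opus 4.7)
The plan is to exploit the contour integral representations \eqref{equsemisolrepspac} of $G^n$ and \eqref{equfullsolrepspac} of $G^n_h$, and reduce the comparison to a single pointwise (in $z$) application of the resolvent estimate in Lemma \ref{lemeroper}.

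First, I would introduce the discrete Laplace transform of the data,
\begin{equation*}
F(z)=\sum_{m=1}^{\infty}\sum_{i=0}^{m-1}d^{\alpha,1}_{i}f^{m}e^{-zt_{m}},\qquad F_{h}(z)=\sum_{m=1}^{\infty}\sum_{i=0}^{m-1}d^{\alpha,1}_{i}f^{m}_{h}e^{-zt_{m}}.
\end{equation*}
Because the scheme uses $f^{m}_{h}=P_{h}f^{m}=P_{h}(e^{-t_{m}\rho U(x_0)}G_0)$ rather than $e^{-t_{m}\rho U(x_0)}P_{h}G_0$, linearity of $P_h$ gives the clean identity $F_{h}(z)=P_{h}F(z)$. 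Moreover, comparing \eqref{equsemisolrepspac} with the closed-form representation \eqref{equsemisolrep} of the same semi-discrete solution, I identify
\begin{equation*}
\tau F(z)=(\beta_{\tau,1}(z))^{\alpha-1}e^{-\tau(z+\rho U(x_0))}G_0\qquad\text{(pointwise in }x_0\text{)}.
\end{equation*}
On $\Gamma^{\tau}_{\theta,\kappa}$ the quantity $\tau|z|$ is bounded, $U$ is bounded on $\bar\Omega$, and Lemma \ref{lemmabetatau} gives $|(\beta_{\tau,1}(z))^{\alpha-1}|\leq C|z|^{\alpha-1}$; hence $\|\tau F(z)\|_{L^{2}(\Omega)}\leq C|z|^{\alpha-1}\|G_0\|_{L^{2}(\Omega)}$.

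Next, subtracting \eqref{equfullsolrepspac} from \eqref{equsemisolrepspac} and using $F_h(z)=P_hF(z)$, I obtain
\begin{equation*}
G^n-G^n_h=\frac{1}{2\pi\mathbf{i}}\int_{\Gamma^{\tau}_{\theta,\kappa}}e^{zt_n}\bigl[((\beta_{\tau,1}(z))^\alpha+A)^{-1}-((\beta_{\tau,1}(z))^\alpha+A_h)^{-1}P_h\bigr](\tau F(z))\,dz.
\end{equation*}
Applying Lemma \ref{lemeroper} with $v=\tau F(z)$ bounds the $L^2(\Omega)$ norm of the bracketed integrand by $Ch^{2}|z|^{\alpha-1}\|G_0\|_{L^{2}(\Omega)}$, and its $\dot H^{1}(\Omega)$ norm by $Ch|z|^{\alpha-1}\|G_0\|_{L^{2}(\Omega)}$. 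Integrating over $\Gamma^{\tau}_{\theta,\kappa}$, the standard contour estimate
\begin{equation*}
\int_{\Gamma^{\tau}_{\theta,\kappa}}|e^{zt_n}||z|^{\alpha-1}|dz|\leq C\Bigl(\int_\kappa^\infty r^{\alpha-1}e^{r\cos(\theta)t_n}dr+\int_{-\theta}^{\theta}\kappa^\alpha e^{\kappa\cos(\varphi)t_n}d\varphi\Bigr)\leq Ct_n^{-\alpha}
\end{equation*}
then yields both $\|G^n-G^n_h\|_{L^2(\Omega)}\leq Ch^{2}t_n^{-\alpha}\|G_0\|_{L^{2}(\Omega)}$ and $\|\nabla(G^n-G^n_h)\|_{L^2(\Omega)}\leq Cht_n^{-\alpha}\|G_0\|_{L^{2}(\Omega)}$, from which the theorem follows.

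The main obstacle, and the reason the authors chose the scheme with $P_{h}f^{m}$ rather than $e^{-t_m \rho U(x_0)}P_h G_0$, is precisely the identification $F_h(z)=P_hF(z)$: it is exactly this identity that lets Lemma \ref{lemeroper} be applied as a black box, with no commutation required between $P_h$ and multiplication by the $x_0$-dependent factor $(\beta_{\tau,1}(z))^{\alpha-1}e^{-\tau(z+\rho U(x_0))}$, and hence with no extra regularity assumption on $U(x_0)$. Once this identity is in place, the rest is a routine resolvent-based contour argument of the kind already used in Theorems \ref{thmsemierror} and \ref{thmsemierrorO2}.
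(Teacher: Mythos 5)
Your proposal is correct and follows essentially the same route as the paper: both subtract the representation \eqref{equfullsolrepspac} from \eqref{equsemisolrepspac}, exploit that $f^m_h=P_hf^m$ so the data term factors through $P_h$, identify $\tau F(z)=(\beta_{\tau,1}(z))^{\alpha-1}e^{-\tau(z+\rho U(x_0))}G_0$ with norm $O(|z|^{\alpha-1})\|G_0\|_{L^2(\Omega)}$, apply Lemma \ref{lemeroper} pointwise in $z$, and finish with the standard contour bound giving $t_n^{-\alpha}$. Your write-up merely makes explicit (via $F_h(z)=P_hF(z)$ and the generating-function identification) what the paper does implicitly.
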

\begin{proof}
	Subtracting \eqref{equfullsolrepspac} from \eqref{equsemisolrepspac} yields
	\begin{equation*}
		\begin{aligned}
			& \|G^n-G^n_h\|_{L^2(\Omega)} \\
&  \leq C\left\|\tau\int_{\Gamma^\tau_{\theta,\kappa}}e^{zt_n}(((\beta_{\tau,1}(z))^{\alpha}+A)^{-1}-((\beta_{\tau,1}(z))^{\alpha}+A_h)^{-1}P_h)\sum_{j=1}^{\infty}\sum_{i=0}^{j-1}d^{\alpha,1}_{i}f^je^{-zt_j}dz\right\|_{L^2(\Omega)}\\
			&  \leq C\tau\int_{\Gamma^\tau_{\theta,\kappa}}|e^{zt_n}|\left\|((\beta_{\tau,1}(z))^{\alpha}+A)^{-1}-((\beta_{\tau,1}(z))^{\alpha}+A_h)^{-1}P_h\right\|\\
			&\qquad\left \|\sum_{j=1}^{\infty}\sum_{i=0}^{j-1}d^{\alpha,1}_{i}G_0e^{-t_j(z+\rho U(x_0))}\right  \|_{L^2(\Omega)}|dz|\\
	&		\leq C\int_{\Gamma^\tau_{\theta,\kappa}}|e^{zt_{n}}|\left\|((\beta_{\tau,1}(z))^{\alpha}+A)^{-1}-((\beta_{\tau,1}(z))^{\alpha}+A_h)^{-1}P_h\right\|\left\|\beta_{\tau,1}(z)\right\|^{\alpha-1}|dz|\|G_0\|_{L^2(\Omega)}.\\
		\end{aligned}
	\end{equation*}
 Thus by Lemma \ref{lemeroper}, there is
	\begin{equation*}
		\|G^n-G^n_h\|_{L^2(\Omega)}\leq Ch^2t_n^{-\alpha}\|G_0\|_{L^2(\Omega)}.
	\end{equation*}
	Similarly, it can be obtained that
		\begin{equation*}
	\begin{aligned}
& \|G^n-G^n_h\|_{\dot{H}^1(\Omega)}
\\
&\leq C\left\|\tau\int_{\Gamma^\tau_{\theta,\kappa}}e^{zt_n}(((\beta_{\tau,1}(z))^{\alpha}+A)^{-1}-((\beta_{\tau,1}(z))^{\alpha}+A_h)^{-1}P_h)\sum_{j=1}^{\infty}\sum_{i=0}^{j-1}d^{\alpha,1}_{i}f^je^{-zt_j}dz\right\|_{\dot{H}^1(\Omega)}\\
&	\leq  C\tau\int_{\Gamma^\tau_{\theta,\kappa}}|e^{zt_n}|\left\|((\beta_{\tau,1}(z))^{\alpha}+A)^{-1}-((\beta_{\tau,1}(z))^{\alpha}+A_h)^{-1}P_h\right\|_{L^2(\Omega)\rightarrow \dot{H}^1(\Omega)}\\
	&\qquad\qquad\left \|\sum_{j=1}^{\infty}\sum_{i=0}^{j-1}d^{\alpha,1}_{i}G_0e^{-t_j(z+\rho U(x_0))}\right  \|_{L^2(\Omega)}|dz|\\
	& \leq C\int_{\Gamma^\tau_{\theta,\kappa}}|e^{zt_{n}}|\left\|((\beta_{\tau,1}(z))^{\alpha}+A)^{-1}-((\beta_{\tau,1}(z))^{\alpha}+A_h)^{-1}P_h\right\|_{L^2(\Omega)\rightarrow \dot{H}^1(\Omega)}\\
	&\qquad\qquad\left\|\beta_{\tau,1}(z)\right\|^{\alpha-1}|dz|\|G_0\|_{L^2(\Omega)}\\
	&\leq Cht^{-\alpha}\|G_0\|_{L^2(\Omega)}.
	\end{aligned}
	\end{equation*}
\end{proof}

\begin{remark}
Comparing Theorem \ref{thmfullerror} and Remark \ref{remfullscheme}, it is easy to see that our numerical scheme \eqref{equsfullscheme} needs much less regularity requirement of $U(x_0)$ than scheme \eqref{equsfullschemew} to keep the optimal convergence rate.
\end{remark}

\begin{remark}
	In this section, we provide the complete error analysis based on the time first order scheme \eqref{equsemischeme}. Similarly, from the second order scheme \eqref{equsemischemeO2}, the fully discrete scheme can be written as: for any $v_h\in X_h$,
	\begin{equation}\label{equfullschemeO2}
		\begin{aligned}
			&\left (e^{-t_{0}\rho U(x_0)}G^{1}_h,v_{h}\right )+(\nabla G^1_{h}, \nabla v_{h})+\frac{1}{2}(\nabla P_h(e^{-t_1\rho U(x_0)}G^{0}), \nabla v_{h})
\\
&=\left (P_h(e^{-t_{1}\rho U(x_0)}d^{\alpha,2}_{0}G^0),v_h\right ), \quad n=1;\\
			&\left (\sum_{i=0}^{n-1}d^{\alpha,2}_{i}e^{-t_{i}\rho U(x_0)}G^{n-i}_h,v_{h}\right )+(\nabla G^n_{h}, \nabla v_{h})
\\
&=\sum_{i=0}^{n-1}d^{\alpha,2}_{i}\left (P_h(e^{-t_{n}\rho U(x_0)}G^0),v_h\right ), \quad n=2,\ldots,N.
		\end{aligned}
	\end{equation}
The optimal convergence of the scheme \eqref{equfullschemeO2} can be similarly got by the techniques used in this section.
	\end{remark}
\section{Numerical experiments}
In this section, we first present four numerical experiments to validate the predicted convergence rate of our numerical schemes and then provide an example to show the difference between \eqref{equsfullscheme} and \eqref{equsfullschemew}. Due to the unknown exact solution, the spatial errors can be tested by
\begin{equation*}
\begin{aligned}
E_{h}=\|G^{n}_{h}-G^{n}_{h/2}\|_{L^2(\Omega)},
\end{aligned}
\end{equation*}
where $G^n_{h}$ means the numerical solution of $G$ at time $t_n$ with mesh size $h$; similarly, the temporal errors can be tested by
\begin{equation*}
\begin{aligned}
E_{\tau}=\|G^n_{\tau}-G^n_{\tau/2}\|_{L^2(\Omega)},
\end{aligned}
\end{equation*}
where $G^n_{\tau}$ are the numerical solutions of $G$ at the fixed time $t_n$ with time step size $\tau$. And corresponding convergence rates can be calculated by
\begin{equation*}
{\rm Rate}=\frac{\ln(E_{h}/E_{h/2})}{\ln(2)},\quad {\rm Rate}=\frac{\ln(E_{\tau}/E_{\tau/2})}{\ln(2)}.
\end{equation*}
For convenience, we choose the domain $\Omega=(0,1)$.

Here, the following two groups of initial values and $U(x_0)$ are used:
\begin{enumerate}[(a)]
	\item \begin{equation*}\label{ini05}
		G_0(x_0)=\chi_{(0,1/2)},\quad U(x_0)=\chi_{(1/2,1)};
	\end{equation*}
	\item \begin{equation*}\label{compare}
	G_0(x_0)=\chi_{(0,1/2)},\quad U(x_0)=x_0,
	\end{equation*}
\end{enumerate}
where $\chi_{(a,b)}$ denotes the characteristic function on $(a,b)$.

\begin{example}
In this example, we use backward Euler scheme \eqref{equsemischeme} to solve Eq. \eqref{eqretosoleq} under condition \eqref{ini05} and show the corresponding numerical results. Here we take $T=1$, $\rho=1+\mathbf{i}$, $\alpha=0.3$, $0.7$, and $\tau=1/50$, $1/100$, $1/200$, $1/400$. We use small spatial mesh size $h = 1/128$ so that the spatial discretization error is relatively negligible. Table \ref{tab:atemporal} presents the $L^2$ errors and convergence rates, which agree with Theorem \ref{thmsemierror}.

\begin{table}[htbp]
	\centering
	\caption{$L^2$ errors and convergence rates with condition \eqref{ini05} using numerical scheme \eqref{equsemischeme}}
	\begin{tabular}{c|cccc}
		\hline
		$\alpha\backslash \tau$& 1/50  & 1/100 & 1/200 & 1/400 \\
		\hline
		0.3   & 6.435E-05 & 3.201E-05 & 1.597E-05 & 7.974E-06 \\
		& Rate  & 1.0072  & 1.0036  & 1.0018  \\
		\hline
		0.7   & 1.118E-04 & 5.521E-05 & 2.743E-05 & 1.368E-05 \\
		& Rate  & 1.0180  & 1.0089  & 1.0045  \\
		\hline
	\end{tabular}%
	\label{tab:atemporal}%
\end{table}%
\end{example}

\begin{example}
	In this example, we use second-order backward difference scheme \eqref{equsemischemeO2} to solve Eq. \eqref{eqretosoleq} under condition \eqref{ini05}. Here we take $T=1$,  $\rho=-1+\mathbf{i}$, $\alpha=0.3$, $0.7$, and $\tau=1/10$, $1/20$, $1/40$, $1/80$.	To investigate the convergence in time
	and eliminate the influence from spatial discretization, we set $h = 1/128$. Table \ref{tab:atemporalo2} presents the $L^2$ errors and convergence rates, which agree with Theorem \ref{thmsemierrorO2}.
	\begin{table}[htbp]
		\centering
		\caption{$L^2$ errors and convergence rates with condition \eqref{ini05} using numerical scheme \eqref{equsemischemeO2}}
		\begin{tabular}{c|cccc}
			\hline
			$\alpha\backslash\tau$&    1/10  &    1/20  &    1/40  &    1/80  \\
			\hline
			0.3   & 5.185E-05 & 1.192E-05 & 2.875E-06 & 7.110E-07 \\
			& Rate  & 2.1215  & 2.0515  & 2.0154  \\
			\hline
			0.7   & 1.452E-04 & 3.343E-05 & 7.981E-06 & 1.968E-06 \\
			& Rate  & 2.1190  & 2.0665  & 2.0199  \\
			\hline
		\end{tabular}%
		\label{tab:atemporalo2}%
	\end{table}%
	
\end{example}

\begin{example}
	We consider the spatial convergence of numerical scheme \eqref{equsfullscheme1} under condition \eqref{compare} and present the numerical results. Here we take $T=1$, $\rho=2+\mathbf{i}$, $\alpha=0.2$, $0.8$, and $h=1/16$, $1/32$, $1/64$, $1/128$. To investigate the convergence in spatial
and eliminate the influence from temporal discretization, we set $\tau = 1/1000$. Table \ref{tab:bspatial} presents the $L^2$ errors and convergence rates and Table \ref{tab:bspatial1} provides the $H^1$ errors and convergence rates, which both agree with the results of Theorem \ref{thmfullerror}.
\begin{table}[htbp]
	\centering
	\caption{$L^2$ errors and convergence rates with condition \eqref{compare} using numerical scheme \eqref{equsfullscheme1}}
	\begin{tabular}{c|cccc}
		\hline
		$\alpha\backslash h$&    1/16  &    1/32  &    1/64  &    1/128 \\
		\hline
		0.2   & 1.072E-04 & 2.683E-05 & 6.708E-06 & 1.677E-06 \\
		& Rate  & 1.9988  & 1.9997  & 1.9999  \\
		\hline
		0.8   & 3.151E-05 & 7.885E-06 & 1.972E-06 & 4.929E-07 \\
		& Rate  & 1.9989  & 1.9997  & 1.9999  \\
		\hline
	\end{tabular}%
	\label{tab:bspatial}%
\end{table}%
\begin{table}[htbp]
	\centering
	\caption{$H^1$ errors and convergence rates with condition \eqref{compare} using numerical scheme \eqref{equsfullscheme1}}
	\begin{tabular}{c|cccc}
		\hline
		$\alpha\backslash h$&    1/16  &    1/32  &    1/64  &    1/128 \\
		\hline
		0.2   & 6.062E-03 & 3.033E-03 & 1.517E-03 & 7.586E-04 \\
		& Rate  & 0.9987  & 0.9997  & 0.9999  \\
		\hline
		0.8   & 1.673E-03 & 8.371E-04 & 4.186E-04 & 2.093E-04 \\
		& Rate  & 0.9991  & 0.9998  & 0.9999  \\
		\hline
	\end{tabular}%
	\label{tab:bspatial1}%
\end{table}%

\end{example}

\begin{example}
	We consider the spatial convergence of numerical scheme \eqref{equfullschemeO2} under condition \eqref{ini05} and show the numerical results. Here we take $T=1$, $\rho=-2+\mathbf{i}$, $\alpha=0.4$, $0.6$, and $h=1/10$, $1/20$, $1/40$, $1/80$.  We use small time step $\tau = 1/1000$ so that the temporal discretization error is relatively negligible. Table \ref{tab:bspatialO2} presents the $L^2$ errors and convergence rates and Table \ref{tab:bspatial1O2} provides the $H^1$ errors and convergence rates. These results show that the numerical scheme achieves optimal convergence rates in $L^2$- and $H^1$-norm.
	
	\begin{table}[htbp]
		\centering
		\caption{$L^2$ errors and convergence rates with condition \eqref{ini05} using numerical scheme \eqref{equfullschemeO2}}
		\begin{tabular}{c|cccc}
			\hline
		$\alpha\backslash h$	&    1/10  &    1/20  &    1/40  &    1/80  \\
		\hline
			0.4   & 1.296E-04 & 3.247E-05 & 8.159E-06 & 2.062E-06 \\
			& Rate  & 1.9966  & 1.9927  & 1.9842  \\
			\hline
			0.6   & 9.379E-05 & 2.355E-05 & 5.944E-06 & 1.517E-06 \\
			& Rate  & 1.9934  & 1.9864  & 1.9707  \\
			\hline
		\end{tabular}%
		\label{tab:bspatialO2}%
	\end{table}%
	
	\begin{table}[htbp]
		\centering
		\caption{$H^1$ errors and convergence rates with condition \eqref{ini05} using numerical scheme \eqref{equfullschemeO2}}
		\begin{tabular}{c|cccc}
			\hline
			$\alpha\backslash h$&    1/10  &    1/20  &    1/40  &    1/80  \\
			\hline
			0.4   & 7.296E-03 & 3.648E-03 & 1.824E-03 & 9.120E-04 \\
			& Rate  & 1.0000  & 1.0000  & 1.0000  \\
			\hline
			0.6   & 5.132E-03 & 2.566E-03 & 1.283E-03 & 6.415E-04 \\
			& Rate  & 0.9999  & 1.0000  & 1.0000  \\
			\hline
		\end{tabular}%
		\label{tab:bspatial1O2}%
	\end{table}%

\end{example}

\begin{example}
In this example, we want to verify the effectiveness of our numerical scheme \eqref{equsfullscheme1}. We show the numerical results for solving Eq. \eqref{eqretosoleq} under condition \eqref{ini05}. Here we take $T=1$, $\tau=1/1000$, $\rho=-1+\mathbf{i}$, $\alpha=0.4$, $0.6$, and $h=1/16$, $1/32$, $1/64$, $1/128$. Table \ref{tab:aspatialr} and Table \ref{tab:aspatialw} present the $L^2$ errors and convergence rates of numerical schemes \eqref{equsfullscheme1} and \eqref{equsfullschemew} respectively. Comparing these results, it can be found that the errors of numerical scheme \eqref{equsfullschemew} are much bigger and it can't achieve optimal convergence rates.  These results show that our scheme is effective.
\begin{table}[htbp]
	\centering
	\caption{$L^2$ errors and convergence rates with condition \eqref{ini05} using numerical scheme \eqref{equsfullscheme1}}
	\begin{tabular}{c|cccc}
		\hline
			$\alpha\backslash h$&    1/16  &    1/32  &    1/64  &    1/128 \\
		\hline
		0.4   & 1.296E-04 & 3.239E-05 & 8.097E-06 & 2.024E-06 \\
		& Rate  & 2.0001  & 2.0000  & 2.0000  \\
		\hline
		0.6   & 9.277E-05 & 2.319E-05 & 5.798E-06 & 1.450E-06 \\
		& Rate  & 1.9999  & 2.0000  & 2.0000  \\
		\hline
	\end{tabular}%
	\label{tab:aspatialr}%
\end{table}%

\begin{table}[htbp]
	\centering
	\caption{$L^2$ errors and convergence rates with condition \eqref{ini05} using numerical scheme \eqref{equsfullschemew}}
	\begin{tabular}{c|cccc}
		\hline
		$\alpha\backslash h$&    1/16  &    1/32  &    1/64  &    1/128 \\
		\hline
		0.4   & 9.232E-04 & 4.620E-04 & 2.315E-04 & 1.160E-04 \\
		& Rate  & 0.9988  & 0.9967  & 0.9976  \\
		\hline
		0.6   & 6.282E-04 & 3.150E-04 & 1.580E-04 & 7.919E-05 \\
		& Rate  & 0.9959  & 0.9950  & 0.9968  \\
		\hline
	\end{tabular}%
	\label{tab:aspatialw}%
\end{table}%

\end{example}

\section{Conclusion}
Backward fractional Feynman-Kac equation describes the functional distribution of anomalous diffusion process. The challenge of regularity analysis and numerical analysis mainly comes from its time-space coupled fractional substantial derivative. This work weakens the regularity requirement of the function $U(x)$ in obtaining the optimal convergence rates.  The provided numerical schemes do not need to make the  assumptions on the regularity of the exact solution in temporal and spatial directions. The error estimates are presented with optimal convergence rates. The performed numerical experiments not only verify the theoretical predictions but also show the effectiveness of the techniques introduced in the scheme to keep the accuracy.

%

\section*{Acknowledgements}
This work was supported by the National Natural Science Foundation of China under grant no. 11671182, and the Fundamental Research Funds for the Central Universities under grant no. lzujbky-2018-ot03.

\appendix
\section{Derivation of \eqref{equalityww}}
Without loss of generality, we denote $\Omega=[0,1]$, $0=x_0<x_1<\cdots<x_n=1$, and $\Delta x_i=x_{i}-x_{i-1}$. Let $\theta=\sup_{x\in\Omega}\arg((\beta_{\tau,1}(z,x))^{\alpha})-\inf_{x\in\Omega}\arg((\beta_{\tau,1}(z,x))^{\alpha})$. Then we have
\begin{equation*}
	\begin{aligned}
		\left| ((\beta_{\tau,1}(z,x))^{\alpha}e,e) \right|=\left |\int_{0}^{1}(\beta_{\tau,1}(z,x))^\alpha e(x)^2dx\right |
		=& \lim_{n\rightarrow\infty}\left |\sum_{i=1}^{n}\Delta x_i(\beta_{\tau,1}(z,x_i))^\alpha e(x_i)^2\right |\\
		\geq &\cos\left (\frac{\theta}{2}\right )\lim_{n\rightarrow\infty}\sum_{i=1}^{n}\Delta x_i e(x_i)^2 |(\beta_{\tau,1}(z,x_i))^\alpha|\\
		\geq&\cos\left (\frac{\theta}{2}\right )\lim_{n\rightarrow\infty}\sum_{i=1}^{n}\Delta x_i e(x_i)^2 |\beta_{\tau,min}^\alpha|\\
		\geq&\cos\left (\frac{\theta}{2}\right ) |\beta_{\tau,min}^\alpha|\|e\|^2_{L^2(\Omega)}.
	\end{aligned}
\end{equation*}
Therefore,
\begin{equation*}
	\cos\left (\frac{\theta}{2}\right ) |\beta_{\tau,min}^\alpha|\|e\|^2_{L^2(\Omega)}\leq\left| ((\beta_{\tau,1}(z,x))^{\alpha}e,e) \right|.	
\end{equation*}


\begin{thebibliography}{99}
	\bibitem{Acosta2018} Acosta, G., Bersetche, F.M., Borthagaray, J.P.: Finite element approximations for fractional evolution problems. Fract. Calc. Appl. Anal. {\bf 22}, 767--794 (2019).
	
	\bibitem{Agmon1984}Agmon, N.: Residence times in diffusion processes. J. Chem. Phys. {\bf81}, 3644--3647 (1984).
	
	
	
	
	\bibitem{Bazhlekova2015} Bazhlekova, E., Jin, B.T., Lazarov, R., Zhou, Z.: An analysis of the Rayleigh-Stokes problem for a generalized second-grade fluid. Numer. Math. {\bf131}, 1--31 (2015).
	
	
%
	
	
	
	\bibitem{Carmi2010}Carmi, S., Turgeman, L., Barkai, E.: On distributions of functionals of anomalous diffusion paths. J. Stat. Phys. {\bf141}, 1071--1092 (2010).
	
	
	\bibitem{Chen2015} Chen, M.H., Deng, W.H.: Discretized fractional substantial calculus.  M2AN Math. Model. Numer. Anal. {\bf49}, 373--394 (2015).
	
	
	\bibitem{Chen2018} Chen, M.H., Deng, W.H.: High order algorithm for the time-tempered fractional Feynman-Kac equation. J. Sci. Comput. {\bf76}, 867--887 (2018).
	
	\bibitem{chen2009} Chen, S., Liu, F., Zhuang, P., Anh, V.: Finite difference approximation for the fractional Fokker-Planck equation. Appl. Math. Model. {\bf33}, 256--273 (2009).

\bibitem{Cheng2015} Cheng, A.J., Wang, H., Wang, K.X.:  A Eulerian-Lagrangian control volume method for solute transport with anomalous diffusion. Numer. Methods Partial Differential Equations {\bf31}, 253--267  (2015).

\bibitem{Deng2008} Deng, W.H.:  Finite element method for the space and time fractional Fokker-Planck equation. SIAM J. Numer. Anal. {\bf47}, 204--226  (2008).

\bibitem{Deng2013} Deng, W.H., Du, S.D., Wu, Y.J.:  High order finite difference WENO schemes for fractional differential equations. Appl. Math. Lett. {\bf26}, 362--366  (2013).
	
	
	\bibitem{Deng2015} Deng, W.H., Chen, M.H., Barkai, E.: Numerical algorithms for the forward and backward fractional Feynman-Kac equations. J. Sci. Comput. {\bf62}, 718--746 (2015).
	
	\bibitem{Deng:17}  Deng, W.H., Li, B.Y., Qian, Z., Wang, H.: Time discretization of a tempered fractional Feynman-Kac equation with measure data. SIAM J. Numer. Anal. {\bf56}, 3249--3275 (2018).
	
	
	
	
	\bibitem{Deng2017}Deng, W.H., Zhang, Z.J.: Numerical schemes of the time tempered fractional Feynman-Kac equation. Comput. Math. Appl. {\bf73}, 1063--1076 (2017).	

\bibitem{Ervin2006} Ervin, V.J., Roop, J.P.:  Variational formulation for the stationary fractional advection dispersion equation. Numer. Methods Partial Differential Equations {\bf22}, 558--576  (2006).
	
\bibitem{Hou2018}Hou, R., Deng, W.H.: Feynman-Kac equations for reaction and diffusion processes. J. Phys. A: Math. Theor. {\bf51}, 155001 (2018).


\bibitem{Jin2016} Jin, B.T., Lazarov, R., Zhou, Z.: Two fully discrete schemes for fractional diffusion and diffusion-wave equations with nonsmooth data. SIAM J. Sci. Comput. {\bf38}, A146--A170 (2016).


\bibitem{Kac1949} Kac, M.: On distribution of certain Wiener functionals. Trans. Am. Math. Soc. {\bf65}, 1--13 (1949).
	
		
\bibitem{Li2015}Li, C., Deng, W.H., Zhao, L.J.: Well-posedness and numerical algorithm for the tempered fractional differential equations. Discrete Contin. Dyn. Syst. Ser. B {\bf24}, 1989--2015 (2019).

	\bibitem{li2012}Li, C.P., Zeng, F.H., Liu, F.: Spectral approximations to the fractional integral and derivative. Frac. Calcu. Appl. Anal. {\bf15}, 383--406 (2012).
	
	\bibitem{Lubich1988}Lubich, C.: Convolution quadrature and discretized operational calculus. I. Numer. Math. {\bf52}, 129--145 (1988).
	
	\bibitem{Lubich1988-2}Lubich, C.: Convolution quadrature and discretized operational calculus. II. Numer. Math. {\bf52}, 413--425 (1988).
	

	\bibitem{Lubich1996} Lubich, C., Sloan, I.H., Thom{\'e}e, V.: Nonsmooth data error estimates for approximations of an evolution equation with a positive-type memory term. Math. Comp. {\bf65}, 1--17 (1996).
	
	\bibitem{Nie2019} Nie, D.X., Sun, J., Deng, W.H.: Numerical algorithms of the two-dimensional Feynman-Kac equation for reaction and diffusion processes. J. Sci. Comput. {\bf 81}, 537--568 (2019).
	
\bibitem{Podlubny1999} Podlubny, I.: Fractional Differential Equations. Academic Press, (1999).

 \bibitem{sun2006} Sun, Z.Z., Wu, X.N.: A fully discrete scheme for a diffusion-wave system. Appl. Numer. Math. {\bf56}, 193--209 (2006).
	

	
	\bibitem{Thomee2006} Thom\'{e}e, V.: Galerkin Finite Element Methods for Parabolic Problems. Springer,  (2006).
	
	
	


\bibitem{Wang2018} Wang, X.D., Chen, Y., Deng, W.H.: Feynman-Kac equation revisited. Phys. Rev. E  {\bf98}, 052114 (2018).


\bibitem{Xu2018} Xu, P.B., Deng, W.H.: Fractional compound Poisson processes with multiple internal states.  Math. Model. Nat. Phenom.  {\bf13}, 10 (2018).


\bibitem{Xu2018-2}Xu, P.B., Deng, W.H.: L\'evy walk with multiple internal states.  J. Stat. Phys. {\bf 173}, 1598--1613 (2018).

%
%
%
	
\end{thebibliography}
\end{document}